\documentclass{amsart}

\pdfoutput=1

\usepackage[utf8]{inputenc}
\usepackage{lmodern}
\usepackage[T1]{fontenc}
\usepackage{amssymb}
\usepackage{amsthm}
\usepackage{mathtools}
\allowdisplaybreaks
\usepackage[dvipsnames,svgnames,table]{xcolor}
\usepackage{embrac} 
\ChangeEmph{(}[-.01em,.04em]{)}[.04em,-.05em]
\usepackage{microtype}
\usepackage{textcase}
\usepackage{enumitem}
\usepackage[acronym]{glossaries}
\usepackage{graphicx}
\usepackage{float}
\usepackage{booktabs}
\usepackage{csquotes}
\usepackage{multirow}
\usepackage{tikz}
\usepackage{caption}
\usepackage{subcaption}
\usetikzlibrary{angles,quotes,calc,positioning,shapes,matrix,arrows}
\usepackage{tikz-cd}
\usetikzlibrary{}
\usepackage{tkz-euclide}
\usepackage{etoolbox}
\usepackage{url}
\usepackage[hyperindex=true, colorlinks=true, linkcolor=Maroon, citecolor=DarkGreen, urlcolor=NavyBlue, breaklinks=true,linktocpage=true,linktoc=all]{hyperref}
\usepackage[english,capitalise,noabbrev]{cleveref}
\crefdefaultlabelformat{#2\textup{#1}#3}
\crefname{equation}{equation}{equations}
\usepackage[logical-quotes,backrefs,initials]{amsrefs}

\theoremstyle{plain}
\newtheorem{theorem}{Theorem}[section]

\newtheorem{corollary}[theorem]{Corollary}

\newtheorem{lemma}[theorem]{Lemma}
\newtheorem{proposition}[theorem]{Proposition}

\theoremstyle{definition}

\newtheorem{definition}[theorem]{Definition}

\theoremstyle{remark}

\newtheorem{remark}[theorem]{Remark}

\newcommand{\R}{\mathbb{R}}
\newcommand{\C}{\mathbb{C}}

\renewcommand{\S}{\mathbb{S}}

\newcommand{\RP}{\mathbb{RP}}
\newcommand{\CP}{\mathbb{CP}}
\newcommand{\HP}{\mathbb{HP}}
\newcommand{\OP}{\mathbb{OP}}

\newcommand{\calX}{\mathcal{X}}
\newcommand{\dd}{\mathop{}\!\mathrm{d}}

\newcommand{\contained}{\subset}

\newcommand{\union}{\cup}
\newcommand{\bigunion}{\bigcup}

\newcommand{\suchthat}{\, : \,}
\newcommand{\st}{\suchthat}
\newcommand{\defined}{\coloneqq}

\newcommand{\from}{\colon}

\newcommand{\diam}{\operatorname{diam}}
\newcommand{\vol}{\operatorname{vol}}
\newcommand{\dist}{\operatorname{dist}}

\newcommand{\distributed}{\sim}

\DeclarePairedDelimiterX\norm[1]\lVert\rVert{\ifblank{#1}{\:\cdot\:}{#1}}
\DeclarePairedDelimiterX\abs[1]\lvert\rvert{\ifblank{#1}{\:\cdot\:}{#1}}
\DeclarePairedDelimiterX\set[1]{\{}{\}}{\ifblank{#1}{\: \:}{#1}}
\DeclarePairedDelimiterX\innerprod[2]\langle\rangle
{\ifblank{#1#2}{\,\cdot,\cdot\,}
	{#1,#2}}
\DeclarePairedDelimiterX\floor[1]\lfloor\rfloor{\ifblank{#1}{\:\cdot\:}{#1}}
\DeclarePairedDelimiterXPP\expectation[2]{\ifblank{#1}{\mathbb{E}}{\mathbb{E}_{#1}}}\lparen\rparen{}{\ifblank{#2}{\:\cdot\:}{#2}}
\DeclarePairedDelimiterXPP\Pochhammer[2]{}{(}{)}{_{#2}}{\ifblank{#1}{\cdot}{#1}}
\DeclarePairedDelimiterXPP\card[1]{\#}{(}{)}{}{\ifblank{#1}{\cdot}{#1}}

\newcommand{\Jacobi}[3]{P_{#1}^{(#2,#3)}}

\newcommand{\Discrepancy}{\mathbb{D}}
\newcommand{\Probability}{\mathbb{P}}
\newcommand{\Variance}{\operatorname{Var}}

\newacronym[longplural=\emph{determinantal point processes}]{dpp}{\textup{\textsc{dpp}}}{\emph{determinantal point process}}

\begin{document}

\title[Discrepancy of DPPs on two-point homogeneous spaces]{Discrepancy of determinantal point processes on compact, connected two-point homogeneous spaces}

\author[C. Beltrán]{Carlos Beltrán}
\address{Carlos Beltrán: Departamento de Matemáticas, Estadística y Computación, Universidad de Cantabria,  Avda. Los Castros, s/n, 39005 Santander, Spain}
\email{beltranc@unican.es}

\author[U. Etayo]{Ujué Etayo}
\address{Ujué Etayo: Departamento de Matemáticas, CUNEF Universidad,  C/\,Almansa, 101, 28040 Madrid, Spain}
\email{ujue.etayo@cunef.edu}

\author[G. Gigante]{Giacomo Gigante}
\address{Giacomo Gigante: Dipartimento di Ingegneria Gestionale, dell'Informazione e della Produzione, Università degli Studi di Bergamo, Viale Marconi 5, Dalmine BG, Italy}
\email{giacomo.gigante@unibg.it}

\author[P.\,R. López-Gómez]{Pedro R. López-Gómez}
\address{Pedro R. López-Gómez: Departamento de Matemáticas, CUNEF Universidad,  C/\,Almansa, 101, 28040 Madrid, Spain}
\email{pedro.lopezgomez@cunef.edu}

\author[R.\,W. Matzke]{Ryan W. Matzke}
\address{Ryan W. Matzke: Department of Mathematics, Applied Mathematics and Statistics, Case Western Reserve University, 10900 Euclid Ave., Cleveland, OH 44106, USA}
\email{rwm106@case.edu}

\date{\today{}}

\thanks{Ujué Etayo has been supported by the starting grant from FBBVA associated with the prize José Luis Rubio de Francia and by the Ramón y Cajal Programme of the Spanish Ministry of Science, Innovation and Universities, through the Agencia Estatal de Investigación (AEI), co-funded by the European Social Fund Plus (ESF+), under grant RYC2024-049105-I.\\
Giacomo Gigante is a member of Gruppo Nazionale per l’Analisi Matematica, la Probabilità e le loro
Applicazioni (GNAMPA) of Istituto Nazionale di Alta Matematica (INdAM). He is also very grateful for the kind hospitality provided during his visit to the University of Cantabria by grant PID2020-113887GB-I00 funded by MCIN/AEI/10.13039/501100011033.\\
Ryan W. Matzke was supported by the NSF Postdoctoral Research Fellowship Grant 2202877.\\
Carlos Beltrán, Ujué Etayo,  and Pedro R. López-Gómez were supported by grant PID2020-113887GB-I00 funded by MCIN/AEI/10.13039/501100011033.}

\subjclass[2020]{11K38, 60G55}
	
\keywords{Discrepancy, two-point homogeneous spaces, determinantal point processes, harmonic ensemble, projective ensemble}

\begin{abstract}
    We study the $L^{\infty}$ discrepancy of point sets generated by determinantal point processes on all compact, connected two-point homogeneous spaces, namely spheres and projective spaces. Using concentration inequalities and variance estimates for the number of points in metric balls, we derive general upper bounds for the discrepancy of homogeneous determinantal point processes. In the particular case of the harmonic ensemble, we show that the discrepancy of $N$ points is $O((N^{1-1/D})^{1/2}\log N)$ with high probability, where $D$ denotes the real dimension of the manifold. For the projective ensemble on $\CP^d$, we obtain the sharper bound $O((N^{1-1/D}\log N)^{1/2})$. These results extend previously known discrepancy estimates for determinantal point processes on the sphere to all compact, connected two-point homogeneous spaces.
\end{abstract}

\maketitle

\section{Introduction and main results}

\subsection{Discrepancy}

The $L^\infty$ discrepancy, or simply \emph{discrepancy}, of a collection of points $x_1,\dotsc,x_N$ in a metric measure space $X$ is defined as
\[
\Discrepancy(x_1,\dotsc,x_N)\defined \sup_{A\in \mathcal A}\abs[\bigg]{\card{\set{j\st x_j\in A}}-\frac{N\vol(A)}{\vol(X)}},
\]
where $\mathcal A$ is some predefined family of subsets of $X$, typically chosen to be metric balls or some other class of sets with a simple description. Hence, the discrepancy is a measure of how well distributed the points are in the space: low discrepancy implies that the empirical measure associated with $x_1,\dotsc,x_N$ accurately approximates the volume of the sets in $\mathcal A$. In this paper, $\mathcal A$ will always be the set of all metric balls in $X$.

Finding collections of points with low discrepancy is a classical and usually very difficult problem. Except in a few cases (for example, when $X$ is the interval or the unit square), the most effective known approach is to consider randomized collections of points for increasing values of $N$ and compute the probability that the resulting discrepancy is small. This technique has been successfully applied in many settings, for example when the family $\mathcal A$ consists of rotations, translations, and dilations of a given convex subset of the flat torus \cite[Theorem 18C]{BeckChen}, tilted boxes in the unit cube \cite{beck}, spherical caps in the $D$-sphere  \cite[Theorem 24D]{BeckChen,BeltranMarzoOrtega}, disc segments \cite[Theorem 23B]{BeckChen}, and more generally when $\mathcal A$ is a class of sets with finite Vapnik--Chervonenkis dimension in metric spaces \cite[Theorem 8.5 and Corollary 8.6]{BCCGT}. In this paper, we focus on the case where $X$ is a compact, connected two-point homogeneous space, that is, a sphere $\S^d$; a real, complex, or quaternionic projective space $\RP^d$, $\CP^d$, or $\HP^d$; or the Cayley plane $\OP^2$. For background on these spaces and the harmonic analysis on them, see, for example, \cite[Section 2]{andersonetal}. 

Let us briefly recall some classical and more recent results on discrepancy with respect to metric balls in compact, connected two-point homogeneous spaces.
We should certainly mention first of all J. Beck's result \cite[Theorem 24C]{BeckChen}. For any collection of $N$ points on the $D$-dimensional sphere, there exists a spherical cap $A$ with discrepancy
\[
\abs[\bigg]{\card{\set{j\st x_j\in A}}-\frac{N\vol(A)}{\vol(X)}}>K (N^{1-1/D})^{1/2}.
\]
The constant $K$ depends only on the dimension $D$.
This result was recently extended to all compact, connected two-point homogeneous spaces by M. Skriganov \cite[Theorem 2.2]{Skriganov}. It should be noted that both Beck's and Skriganov's theorems establish a stronger result, since the lower bound is in fact for the $L^2$ average of the quantity $\abs[\big]{\card{\set{j\st x_j\in A}}-N\vol(A)/\vol(X)}$ with respect to a suitable measure $\eta$ on the collection $\mathcal A$:
\begin{align*}
    \Discrepancy_2(x_1,\dotsc,x_N)&\defined \biggl(\int_{\mathcal A}\abs[\bigg]{\card{\set{j\st x_j\in A}}-\frac{N\vol(A)}{{\vol(X)}}}^2 \dd\eta(A)\biggr)^{1/2}\\
    &>K (N^{1-1/D})^{1/2}.
\end{align*}
Concerning the $L^2$ discrepancy, it was also shown in \cite{Skriganov}
that there exist point configurations for which
\[
\Discrepancy_2(x_1,\ldots,x_N)<\hat K (N^{1-1/D})^{1/2}.
\]
The existence of such configurations is proved by probabilistic methods, as usual, but Skriganov also shows that optimal $t$-designs (that is, $t$-designs with an asymptotically minimal number of points) attain the same optimal bound for the $L^2$ discrepancy. In fact, probabilistic methods yield the upper bound $(N^{1-1/D})^{1/2}$ in very general settings even for the $L^p$ discrepancy, for all $p\in[1,+\infty)$ \cite[Corollaries 8.2, 8.3 and 8.4]{BCCGT}. A standard argument that uses the finiteness of the Vapnik--Chervonenkis dimension of the collection $\mathcal A$ allows one to extend the upper estimate to the case $p=+\infty$, at the cost of a logarithmic factor \cite[Theorem 8.5 and Corollary 8.6]{BCCGT}:
\begin{equation*}
    \Discrepancy(x_1,\ldots,x_N)<K (N^{1-1/D}\log(N))^{1/2}.
\end{equation*}
All the examples mentioned at the beginning of this introduction exhibit in fact the same logarithmic loss. It is not known whether this logarithmic term can be removed from the upper bounds.

The random point configurations yielding the quasi-optimal discrepancy estimates in the cases mentioned above are essentially obtained by selecting one point uniformly in each element of an area regular partition of $X$. A more general approach is to estimate the discrepancy of a random collection of points drawn from a \gls{dpp}. This has been done in \cite{alishashi-zamani} for the spherical ensemble on the $2$-dimensional sphere, matching the bound $(N^{1/2}\log N)^{1/2}$, and in \cite{BeltranMarzoOrtega} for the harmonic ensemble on the $D$-dimensional sphere, yielding the slightly worse bound $(N^{1-1/D})^{1/2}\log N$. In \cite{BordaGrabnerMatzke}, the authors studied the expected $L^2$ discrepancy of the projective ensemble on the complex projective spaces $\mathbb{CP}^d$, and the harmonic ensemble for spheres $\mathbb{S}^d$, finding upper bounds $(N^{1-1/2d})^{1/2}$ and $(N^{1-1/2d})^{1/2} \sqrt{\log N}$, respectively. The first bound is optimal, but the second has an extra factor of $\sqrt{\log N}$, similar to the findings for the $L^{\infty}$ discrepancy of the harmonic ensemble in \cref{cor:Discrepancy-harmonic} below.

The main goal of this paper is to extend the techniques used to study the discrepancy of \glspl{dpp} on spheres to the remaining compact, connected two-point homogeneous spaces.

\subsection{Determinantal point processes}

In this section, we introduce the basic concepts on \glspl{dpp} that we will use in this work. For a more comprehensive reference, we refer the reader to \cite[Chapter 4]{GAF}.

Let $\Lambda$ be a locally compact, Polish topological space endowed with a Radon measure $\mu$ (in particular, any compact Riemannian manifold endowed with its natural measure satisfies these hypotheses). A \emph{simple point process} $\calX$ of $N$ points on $\Lambda$ is a random variable taking values in the space of $N$-point subsets of $\Lambda$. In other words, a simple point process selects $N$ points at random in $\Lambda$, and with probability $1$ they are pairwise different.
q
The \emph{joint intensities}, or \emph{correlation functions}, are functions $\rho_{k}\from \Lambda^k\to[0,\infty)$, with $k\geq1$, such that for any family of mutually disjoint subsets $A_1,\dotsc,A_k$ of $\Lambda$ we have
\begin{equation*}
  \expectation[\Big]{x\sim\mathcal{X}}{\prod_{i=1}^k \card{x\cap A_i}}
  =
  \int_{\prod A_i}\rho_k({y_1,\dotsc,y_k})\dd\mu({y_1})\dotsb\dd\mu({y_k}).
\end{equation*} 
By $x\sim\mathcal{X}$ we mean that $x=\set{x_1,\dotsc,x_N}$ is a subset of $N$ elements of $\Lambda$ sampled from the point process $\calX$.

From \cite[Formula (1.2.2)]{GAF}, for any measurable function $\phi\from \Lambda^{k} \to [0, \infty)$ the following equality holds:
\begin{equation}\label{eq:joint-intensities}
	\expectation[\Big]{x\distributed\mathcal{X}}{\sum_{\substack{i_1,\dotsc,i_k\\ \text{distinct}}}\!\!\phi(x_{i_1},\dotsc,x_{i_k})}=\int\limits_{\Lambda^k}\phi(y_1,\dotsc,y_k)\rho_k(y_1,\dotsc,y_k)\dd\mu(y_1)\dotsb\dd\mu(y_k).
\end{equation}
If there exists a measurable function $K\from \Lambda\times\Lambda\to\C$ such that these joint intensities can be written as 
\begin{equation*}
    \rho_{k}(x_{1},\dotsc,x_{k}) = \det(K(x_{i},x_{j}))_{1\leq i,j\leq k},
\end{equation*}
then we say that $\calX$ is a \emph{determinantal point process} with kernel $K$. A particularly suitable collection of such processes is obtained by choosing $K$ as the reproducing kernel of an $N$-dimensional subspace $H$ of the Hilbert space $L^2(\Lambda,\C)$. Recall that the reproducing kernel of $H$ is the unique continuous, Hermitian, positive-definite function $K_H\from \Lambda\times\Lambda\to\C$ such that $K_H(\cdot,x)\in H$ for all $x\in\Lambda$ and
\begin{equation*}
    f(x)=\innerprod{f}{K_H(\cdot,x)}=\int_{\Lambda}f(y)K_H(x,y)\dd \mu(y),\qquad \forall x\in\Lambda,\ \forall f\in H.
\end{equation*}
Given any orthonormal basis $\varphi_{1},\dotsc,\varphi_{N}$ of $H$, we have
\begin{equation*}
     K_H(x,y)=\sum_{i=1}^N\varphi_i(x)\overline{\varphi_i(y)},
\end{equation*}
and we say that $K_H$ is a \emph{projection kernel of trace $N$}.

The following result, which is a direct consequence of the Macchi--Soshnikov theorem (see \cite[Theorem 4.5.5]{GAF}), guarantees the existence of determinantal point processes associated with projection kernels.
 
\begin{proposition}\label{prop:expectation-dpp}	
Let $\Lambda$ be a locally compact, Polish topological space with a Radon measure $\mu$ and let $H \subset L^2(\Lambda,\mathbb C)$ have dimension $N$. Let $K_H$ be the reproducing kernel of $H$. Then, there exists a simple point process $\mathcal{X}_H$ in $\Lambda$ of $N$ points with associated joint intensity functions
\begin{equation*}
	\rho_{k}(x_{1},\dotsc,x_{k}) = \det(K_H(x_{i}, x_{j}))_{1\leq i,j\leq k}.
\end{equation*}
\end{proposition}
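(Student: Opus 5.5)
The plan is to invoke the Macchi--Soshnikov theorem \cite[Theorem 4.5.5]{GAF}. It states that a locally trace-class, self-adjoint integral operator $\mathcal K$ on $L^2(\Lambda,\mu)$, with spectrum contained in $[0,1]$, defines a determinantal point process whose joint intensities are $\det(K(x_i,x_j))$. So we only need to check its hypotheses for the operator $\mathcal K_H f(x)=\int_\Lambda K_H(x,y)f(y)\dd\mu(y)$, and then identify the resulting process as a simple process with exactly $N$ points.

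First, fix an orthonormal basis $\varphi_1,\dotsc,\varphi_N$ of $H$ and write $K_H(x,y)=\sum_{i=1}^N\varphi_i(x)\overline{\varphi_i(y)}$. Then $\mathcal K_H f=\sum_i\innerprod{f}{\varphi_i}\varphi_i$ is exactly the orthogonal projection onto $H$. It is therefore self-adjoint, its spectrum is $\{0,1\}$ with eigenvalue $1$ of multiplicity $N$, and it has finite rank, so it is trace class. Its trace is $\int_\Lambda K_H(x,x)\dd\mu(x)=\sum_i\norm{\varphi_i}^2=N$. Local trace class follows immediately, since restricting to a compact set only decreases the trace. Applying Macchi--Soshnikov gives a point process $\mathcal X_H$ with the stated joint intensities.

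Second, we show that $\mathcal X_H$ has exactly $N$ points almost surely. For a projection kernel of finite rank, \cite[Lemma 4.5.1]{GAF} (or the standard computation) gives
\[
\expectation{}{\card{\mathcal X_H}}=\int K_H(x,x)\dd\mu=N,
\qquad
\Variance(\card{\mathcal X_H})=\int K_H(x,x)\dd\mu-\iint\abs{K_H(x,y)}^2\dd\mu\dd\mu=N-N=0.
\]
The double integral equals $N$ by the reproducing property applied to $K_H(\cdot,x)\in H$. Hence $\card{\mathcal X_H}=N$ almost surely. Alternatively, and more concretely, one can use the explicit construction: the density
\[
\frac1{N!}\det(K_H(x_i,x_j))_{i,j\le N}=\frac1{N!}\abs{\det(\varphi_i(x_j))}^2
\]
is a probability density on $\Lambda^N$ by the Cauchy--Binet/Andréief identity, and one checks that its $k$-point marginals, multiplied by $N!/(N-k)!$, are $\det(K_H(x_i,x_j))_{i,j\le k}$. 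This check integrates out one variable at a time using $\int K_H(x,y)K_H(y,z)\dd\mu(y)=K_H(x,z)$, expanding the determinant along the last row and column.

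Third, for simplicity, note that the density vanishes whenever two coordinates coincide, since two columns of $(\varphi_i(x_j))$ are then equal. The diagonal set therefore has zero probability, so the points are almost surely pairwise distinct.

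The main subtlety is the marginal computation in the explicit construction, or equivalently the verification of the Macchi--Soshnikov hypotheses. Since the result is quoted directly from the theorem, the step needing most care is confirming that the point count is deterministic, which is handled by the zero-variance identity above.
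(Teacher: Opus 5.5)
Your proposal is correct and takes the same route as the paper, which proves this only by citing the Macchi--Soshnikov theorem. You fill in what the paper leaves implicit: you check the hypotheses for the rank-$N$ orthogonal projection onto $H$, show that the point count is exactly $N$ (zero variance, or the explicit $\frac{1}{N!}\abs{\det(\varphi_i(x_j))}^2$ density), and show the process is simple.
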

\begin{remark}\label{remark:KH-xx}

Under the hypotheses of Proposition \ref{prop:expectation-dpp}, from \eqref{eq:joint-intensities} with $\phi\equiv 1$ and $k=1$ we have
\begin{equation}\label{eq:expected-N-dpp}
    N=\expectation{x\sim\mathcal{X}_H}{N} 
    = \int_{\Lambda} K_H(x,x)\dd\mu(x).
\end{equation}
\end{remark}
We say that a \gls{dpp} $\mathcal X$ in a compact, connected two-point homogeneous space $\Omega$ is \emph{homogeneous} if it is invariant under the action of the isometry group of $\Omega$. More specifically, if we denote by $G$ the isometry group of $\Omega$, this means that
\begin{equation*}
    K(gx,gy)=K(x,y),\qquad \forall g\in G,\quad \forall x,y\in\Omega.
\end{equation*}
Since the space is homogeneous, the isometry group acts transitively and the previous equation implies that $K(x,x)$ is constant.

Following the notation in \cite{andersonetal}, when $\Lambda=\Omega$ is a compact, connected two-point homogeneous space, we will use the normalization $\vol(\Omega)=1$. Then, if the \gls{dpp} $\mathcal{X}_H$ is homogeneous, since $K_H(x,x)$ is constant we deduce from \eqref{eq:expected-N-dpp} that 
\begin{equation*}
    K_H(x,x)=N.
\end{equation*}
In addition, it is easy to see that the expected value of the number of points that fall inside a measurable set $A\contained \Omega$ is precisely $N\vol(A)$.

In this work, we focus on two specific homogeneous \glspl{dpp}: the harmonic ensemble and the projective ensemble, the latter only for the complex projective space $\CP^d$.

\subsubsection{The harmonic ensemble on compact, connected two-point homogeneous spaces}\label{sec:harmonic-ensemble}

The harmonic ensemble on a compact Riemannian manifold $\mathcal{M}$ is the \gls{dpp} generated by the reproducing kernel of the finite-dimensional subspace of $L^2(\mathcal{M})$ spanned by the eigenfunctions of the Laplace--Beltrami operator on $\mathcal{M}$ corresponding to eigenvalues up to a certain threshold. It was first introduced in the case of the sphere $\S^d$ in \cite{BeltranMarzoOrtega} and later generalized to all the compact, connected two-point homogeneous spaces in \cite{andersonetal}. 

In what follows, we denote any such compact, connected two-point homogeneous space by $\Omega$. For our purposes, it suffices to note that the kernel of the \gls{dpp} known as the harmonic ensemble on $\Omega$ is given, for $L\geq 1$, by
\begin{equation}\label{eq:kernel-harmonic}
    K_L^{(\alpha,\beta)}(p,q)
    =
    \frac{\Pochhammer{\alpha+\beta+2}{L}}{\Pochhammer{\beta+1}{L}}\Jacobi{L}{\alpha+1}{\beta}\bigl(\cos(2\kappa\dist(p,q))\bigr),
\end{equation}
where $\dist(p,q)$ is the Riemannian distance in $\Omega$ from $p$ to $q$, $\Jacobi{L}{\alpha}{\beta}$ is the Jacobi polynomial of degree $L$ with parameters $\alpha$ and $\beta$,  $\Pochhammer{a}{L}$ is the Pochhammer symbol or rising factorial, and $\kappa=1/2$ for the sphere and $\kappa=1$ for the projective spaces. The number of points sampled by this \gls{dpp} is given by
\begin{equation*}
    N=\pi_L=\frac{\Pochhammer{\alpha+\beta+2}{L}\Pochhammer{\alpha+2}{L}}{\Pochhammer{\beta+1}{L}L!} \sim \frac{\Gamma(\beta +1)}{\Gamma(\alpha + \beta + 2)\Gamma(\alpha +2)}L^{2\alpha + 2}.
\end{equation*}
The values of $\alpha$ and $\beta$ are fixed for each $\Omega$; see Table \ref{table:alphaomega}.

\setlength{\tabcolsep}{10pt}
\begin{table}[h!]
\centering
\caption{Values of $\alpha$, $\beta$, and the real dimension $D=2\alpha+2$ for each two-point homogeneous space.}
\label{table:alphaomega}
\begin{tabular}{lrrr}
\toprule
$\Omega$            & $\alpha$      & $\beta$   & $D$ \\ 
\midrule
$\S^{d}$          & $(d-2)/2$     & $(d-2)/2$ & $d$ \\ 
$\RP^{d}$         & $(d-2)/2$     & $-1/2$    & $d$ \\ 
$\CP^{d}$         & $d-1$         & $0$       & $2d$ \\ 
$\HP^{d}$         & $2d-1$        & $1$       & $4d$ \\ 
$\OP^{2}$           & $7$           & $3$       & $16$ \\
\bottomrule
\end{tabular}
\end{table}

\subsubsection{The projective ensemble}\label{sec:projective-ensemble}

In \cite{BeltranEtayo}, the authors introduced an extension to $\CP^d$ of the spherical ensemble studied in \cite{alishashi-zamani} for the two-dimensional sphere (which is diffeomorphic to the complex projective line $\CP^1$). They called this \gls{dpp} the \emph{projective ensemble}. The kernel of this \gls{dpp} satisfies, for $L\geq 1$,
\begin{equation}\label{eq:kernel-projective}
    \abs[\big]{K_\star^{(N,d)}(p,q)}=N\abs[\bigg]{\innerprod[\bigg]{\frac{p}{\norm{p}}}{\frac{q}{\norm{q}}}}^L,
\end{equation}
where $N$ is the number of points sampled by this random process. Here, $p$ and $q$ are affine representatives of the corresponding projective points in $\CP^d$. The parameters $N$ and $L$ are related by the following formula:
\begin{equation}\label{eq:dyL}
    N=\binom{d+L}{d}\sim L^d.
\end{equation}
Hence, both \glspl{dpp} can only generate $N$ points for $N$ in an infinite subsequence of the natural numbers, but not for arbitrary integers $N$.

\subsection{Main results}

Our first main result is the following theorem, which provides the fundamental tool that enables us to establish the bounds for the discrepancy of the two homogeneous \glspl{dpp} analyzed in this work. Here, $N_A$ denotes the number of points coming from the random process that fall inside $A$.

\begin{theorem}\label{th:maintool}
    Let $\mathcal X$ be a homogeneous \gls{dpp} on $\Omega$, and let $M>0$. Then, with probability at least $1-N^{-M}$, the discrepancy of $N$ points sampled from the \gls{dpp} on $\Omega$ satisfies
    \begin{equation*}
        \Discrepancy(x_1,\ldots,x_N)=O\Bigl(\log N+\bigl(\log N\sup_{A\in\mathcal A}\Variance(N_A)\bigr)^{1/2}\Bigr),
    \end{equation*}
    where $\mathcal A$ denotes the collection of all metric balls in $\Omega$.
\end{theorem}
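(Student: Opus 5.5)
We combine three ingredients: a Bernstein-type concentration inequality for linear statistics of a \gls{dpp} with a projection kernel, a union bound over a finite net of metric balls, and a monotonicity (sandwiching) argument to pass from the net to all balls.

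\textbf{Step 1: concentration for a single ball.} For a \gls{dpp} with Hermitian projection kernel, the count $N_A$ of points in a fixed set $A$ has the law of a sum of independent Bernoulli variables, whose parameters are the eigenvalues of the restricted operator $K_H\chi_A$ (see \cite[Theorem 4.5.3]{GAF}). Bernstein's inequality for such sums then gives, for every $t>0$,
\begin{equation*}
\Probability\bigl(\abs{N_A-N\vol(A)}\geq t\bigr)\leq 2\exp\Bigl(-\frac{t^2}{2(\Variance(N_A)+t/3)}\Bigr).
\end{equation*}
Set $V\defined\sup_{A\in\mathcal A}\Variance(N_A)$ and choose $t=C\bigl(\log N+(V\log N)^{1/2}\bigr)$. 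With a large enough constant $C=C(M)$, the right-hand side is at most $N^{-M-c}$ for any prescribed $c>0$. This holds because either $t^2/V\gtrsim C^2\log N$ or $t\gtrsim C\log N$.

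\textbf{Step 2: net of balls and union bound.} Balls are $B(p,r)$ with $p\in\Omega$ and $r\in[0,\diam\Omega]$. Take a $\delta$-net $P$ of centers with $\delta=N^{-2}$, which has cardinality $O(N^{2D})$. Take radii $r_k=k\delta$. This gives a family $\mathcal B$ of $O(N^{2D+1})$ balls. Applying Step 1 with $c=2D+2$ to each ball in $\mathcal B$ and taking a union bound shows that, with probability at least $1-N^{-M}$, every $B\in\mathcal B$ satisfies $\abs{N_B-N\vol(B)}\leq t$.

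\textbf{Step 3: sandwiching.} Let $B(p,r)$ be an arbitrary ball. Choose $q\in P$ with $\dist(p,q)<\delta$ and radii from the grid such that
\begin{equation*}
B(q,r_k)\subset B(p,r)\subset B(q,r_{k'}),\qquad r_{k'}-r_k\leq 4\delta.
\end{equation*}
Monotonicity of the counting function then yields
\begin{equation*}
N_{B(p,r)}-N\vol(B(p,r))\leq t+N\bigl(\vol(B(q,r_{k'}))-\vol(B(p,r))\bigr),
\end{equation*}
and the analogous lower bound holds. The volume of a ball depends only on its radius, through $\int_0^r \sin^{2\alpha+1}(\kappa s)\cos^{2\beta+1}(\kappa s)\dd s$ up to normalization. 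For $\RP^d$ we have $\beta=-1/2$, and the density is still bounded. Hence this volume is Lipschitz in $r$, and the error term is $O(N\delta)=O(N^{-1})$. Combining the three steps gives
\begin{equation*}
\Discrepancy(x_1,\dotsc,x_N)\leq t+O(1)=O\bigl(\log N+(V\log N)^{1/2}\bigr).
\end{equation*}

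\textbf{Main obstacle.} The delicate point is Step 1: justifying the Bernoulli-sum representation, or an equivalent Bernstein bound, for a general homogeneous \gls{dpp} on $\Omega$. I would first try to cite the known fact for Hermitian locally trace-class kernels with spectrum in $[0,1]$, which covers the projection kernels considered here. As an alternative, I would bound the Laplace transform directly via $\expectation{}{e^{sN_A}}=\det\bigl(I+(e^s-1)K\chi_A\bigr)$. Steps 2 and 3 are routine, apart from checking the uniform Lipschitz bound for the volume of balls.
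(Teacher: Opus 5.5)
Your proposal is correct and follows essentially the same route as the paper. The paper also represents $N_A$ as a sum of independent Bernoullis and applies a Bernstein-type tail bound (it uses Beck's two-regime lemma where you use the standard form with $\Variance(N_A)+t/3$). It then takes a union bound over a polynomial-size family of balls built from a separated set of centers and a grid of radii, and finishes with the same sandwiching argument.

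Two small remarks:
\begin{itemize}
\item With radii $k\delta$ and $\delta=N^{-2}$ there are $O(N^{2})$ radii, not $O(N)$. Your family therefore has $O(N^{2D+2})$ balls, and you should take $c=2D+3$; this is harmless since $c$ is arbitrary.
\item Your Lipschitz bound on $r\mapsto\vol(B(p,r))$ via the bounded radial density is a sounder justification of the final step than the paper's appeal to Ahlfors regularity. Ahlfors regularity on its own gives only $c_1r^D\le\vol(B(x,r))\le c_2r^D$, which does not control the volume of thin annuli.
\end{itemize}
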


With \cref{th:maintool} in hand, deriving the desired bounds on the discrepancy reduces to computing the variance of $N_A$ for the \glspl{dpp} under consideration. For the harmonic ensemble, we obtain the following result.

\begin{theorem}\label{th:Variance-harmonic}
    Let $\Omega$ be a compact, connected two-point homogeneous space with real dimension $D$. Let $A$ be a metric ball in $\Omega$, and let $N_A$ denote the number of points from the harmonic ensemble that fall inside $A$. Then,
    \begin{equation*}
        \Variance(N_A)\leq KN^{1-1/D}\log N,
    \end{equation*}
    for some constant $K>0$ independent of $A$.
\end{theorem}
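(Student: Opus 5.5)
We use the standard variance formula for a projection \gls{dpp}. Since $K=K_L^{(\alpha,\beta)}$ is a Hermitian projection kernel with $K(x,x)=N$ and $\vol(\Omega)=1$, applying \eqref{eq:joint-intensities} with $k=1,2$ and the reproducing property $\int_\Omega\abs{K(x,y)}^2\dd\mu(y)=K(x,x)=N$ gives
\begin{equation*}
\Variance(N_A)=\int_A\int_{\Omega\setminus A}\abs{K(x,y)}^2\dd\mu(y)\dd\mu(x).
\end{equation*}
By \eqref{eq:kernel-harmonic}, $\abs{K(x,y)}^2$ depends only on $t=\dist(x,y)$. Write $F(t)$ for this function. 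Let $A=B(p,r)$. It suffices to treat $r$ at most half the diameter, since $\Variance(N_A)=\Variance(N_{\Omega\setminus A})$ and the complement of a ball is again a ball (two-point homogeneity, antipodal or cut locus structure). The pair $(x,y)$ with $x\in A$, $y\notin A$ requires $\dist(x,\partial A)\le t$. Hence
\begin{equation*}
\Variance(N_A)\le \int_0^{\pi/(2\kappa)}F(t)\,\vol\{x\in A\st \dist(x,\partial A)\le t\}\,\sigma(t)\dd t,
\end{equation*}
where $\sigma(t)\asymp \sin^{2\alpha+1}(\kappa t)\cos^{2\beta+1}(\kappa t)$ is the density of the distance from a fixed point. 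On these spaces the tube volume satisfies $\vol\{x\in A\st\dist(x,\partial A)\le t\}\lesssim \min(t,r)\,r^{D-1}\lesssim t$ uniformly in $r$.

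The key step is then estimating $\int_0^{c} t\,F(t)\sigma(t)\dd t$ with $F(t)=\bigl(\tfrac{(\alpha+\beta+2)_L}{(\beta+1)_L}\bigr)^2 \Jacobi{L}{\alpha+1}{\beta}(\cos 2\kappa t)^2$. The prefactor is $\asymp L^{2\alpha+2}$. We split the range into three regions and use the classical Szegő bounds for Jacobi polynomials.

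\emph{Near region, $t\lesssim 1/L$.} Here $\abs{P_L^{(\alpha+1,\beta)}}\lesssim L^{\alpha+1}$, so $F\lesssim L^{4\alpha+4}$. The contribution is at most $L^{4\alpha+4}\int_0^{1/L}t\cdot t^{2\alpha+1}\dd t\asymp L^{2\alpha+1}$.

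\emph{Middle region, $1/L\lesssim t\le c$, away from the far endpoint.} Here $\abs{P_L^{(\alpha+1,\beta)}(\cos\theta)}\lesssim L^{-1/2}\theta^{-\alpha-3/2}$ with $\theta=2\kappa t$. So $F\lesssim L^{2\alpha+1}t^{-2\alpha-3}$, and the contribution is $\lesssim L^{2\alpha+1}\int_{1/L}^{c}t\cdot t^{-2\alpha-3}t^{2\alpha+1}\dd t=L^{2\alpha+1}\log L$.

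\emph{Far region, near the far endpoint ($\cos$ near $-1$).} The bound is $\abs{P_L}\lesssim L^{\beta}$ there, or the Szegő bound with $\beta$-weight. The factor $\cos^{2\beta+1}$ in $\sigma$ makes this region contribute $O(L^{2\alpha+1})$ at most; the same computation with $\beta$ in place of $\alpha+1$ gives a smaller power.

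Since $N\asymp L^{2\alpha+2}=L^D$, we get $L^{2\alpha+1}\log L\asymp N^{1-1/D}\log N$. The constants are independent of $r$, which proves the claim.

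The main obstacle is the geometric step: justifying the uniform tube bound $\vol\{x\in A\st\dist(x,\partial A)\le t\}\lesssim t$ for balls of all radii in projective spaces, including the case where $r$ is close to the injectivity radius. We plan to handle it by reducing to $r$ at most half the diameter via complements and then integrating the explicit density $\sigma$ over $[r-t,r]$. A secondary check is that the middle-region Szegő estimate holds uniformly up to the far endpoint with the correct $\beta$ behaviour, including $\beta=-1/2$ for $\RP^d$.
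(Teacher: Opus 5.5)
Your argument is correct and is essentially the paper's proof. The paper first bounds the outer density $(\sin\phi)^{2\alpha+1}(\cos\phi)^{2\beta+1}$ by $1$ and then integrates over the region $0\le\phi\le\kappa r$, $\theta\ge\kappa r-\phi$. Swapping the order of integration there gives exactly your weight $\min(\theta,\kappa r)$. The paper's regions $R_1$--$R_4$ are your three Szegő regimes, and the $\log L$ arises from the same $\int_{1/L}^{c}\dd t/t$.

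You should drop the reduction via complements. It is false outside spheres: in $\RP^d$, $\CP^d$, $\HP^d$ with $d\ge2$, and in $\OP^2$, the complement of a ball is a tube around the cut locus, not a ball. It is also unnecessary. By the triangle inequality the relevant set is $\{x\in A\st\dist(x,a)\ge r-t\}$. Its volume is $\int_{\max(0,r-t)}^{r}\sigma\lesssim t$ for every $r$, simply because $\sigma$ is bounded, so your ``main obstacle'' disappears.
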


Then, combining \cref{th:maintool,th:Variance-harmonic}, we immediately obtain the following corollary.

\begin{corollary}\label{cor:Discrepancy-harmonic}
    For every $M>0$, the discrepancy of $N=\pi_L$ points $x_1,\dotsc,x_N\in \Omega$ drawn from the harmonic ensemble satisfies
    \begin{equation*}
        \Discrepancy(x_1,\ldots,x_N)= O\biggl(\bigl(N^{1-1/D}\bigr)^{1/2}\log N\biggr)
    \end{equation*}
    with probability at least $1-N^{-M}$.
\end{corollary}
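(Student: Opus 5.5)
The corollary follows by substituting the variance estimate of \cref{th:Variance-harmonic} into the general bound of \cref{th:maintool}. No further probabilistic input is needed.

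\textbf{Step 1: the hypotheses of \cref{th:maintool}.} The harmonic ensemble is a homogeneous \gls{dpp} on $\Omega$. Its kernel \eqref{eq:kernel-harmonic} depends only on $\dist(p,q)$, so it is invariant under the isometry group $G$, because isometries preserve distances. It is the reproducing kernel of an $N$-dimensional subspace, with $N=\pi_L$, so \cref{prop:expectation-dpp} applies. Hence for every $M>0$, with probability at least $1-N^{-M}$,
\[
\Discrepancy(x_1,\dotsc,x_N)\le C\Bigl(\log N+\bigl(\log N\sup_{A\in\mathcal A}\Variance(N_A)\bigr)^{1/2}\Bigr),
\]
where $C$ depends on $M$ and $\Omega$.

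\textbf{Step 2: insert the variance bound.} By \cref{th:Variance-harmonic}, $\sup_{A\in\mathcal A}\Variance(N_A)\le KN^{1-1/D}\log N$. This is legitimate because $K$ is independent of $A$, so the bound survives taking the supremum over all balls. The square-root term is therefore at most
\[
\bigl(K N^{1-1/D}\log^2 N\bigr)^{1/2}=K^{1/2}\bigl(N^{1-1/D}\bigr)^{1/2}\log N.
\]

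\textbf{Step 3: absorb the lower-order term.} It remains to check that $\log N=O\bigl((N^{1-1/D})^{1/2}\log N\bigr)$. This holds because $D\ge 1$ gives $N^{1-1/D}\ge 1$. In the trivial case $D=1$, i.e.\ $\S^1$ or $\RP^1$, both terms are of order $\log N$ and the claim still holds.

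Combining the two terms gives the stated bound with the same probability $1-N^{-M}$. There is no real obstacle: the only care needed is to confirm that the constants in both theorems are uniform over $A$ and depend only on $M$ and $\Omega$.
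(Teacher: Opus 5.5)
Your proposal is correct and is exactly the paper's argument: the corollary follows by inserting the $A$-uniform bound $\sup_{A\in\mathcal A}\Variance(N_A)\le KN^{1-1/D}\log N$ from \cref{th:Variance-harmonic} into \cref{th:maintool} and absorbing the lower-order $\log N$ term. Your additional checks are handled correctly: that the kernel is homogeneous because it depends only on $\dist(p,q)$, and that the constants are uniform in $A$.
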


In the case of the projective ensemble introduced in \cref{sec:projective-ensemble}, we prove the following result, which shows that the projective ensemble improves the bound for the harmonic ensemble by a factor of $\log N$.

\begin{theorem}\label{th:Variancep}
    Let $A$ be a metric ball in $\CP^d$, let $D=2d$ denote the real dimension of $\CP^d$, and let $N_A$ be the number of points drawn from the projective ensemble that fall inside $A$. Then,
    \begin{equation*}
        \Variance(N_A)\leq KN^{1-1/D},
    \end{equation*}
    for some constant $K>0$ independent of $A$.
\end{theorem}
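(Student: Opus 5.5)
We will compute the variance of $N_A$ directly from the projection kernel. For a \gls{dpp} with Hermitian projection kernel $K$ and $K(x,x)=N$, formula \eqref{eq:joint-intensities} with $k=1,2$ gives
\[
\Variance(N_A)=\int_A K(x,x)\dd\mu(x)-\int_A\int_A\abs{K(x,y)}^2\dd\mu(x)\dd\mu(y)=\int_A\int_{\Omega\setminus A}\abs{K(x,y)}^2\dd\mu(y)\dd\mu(x).
\]
The second equality uses the reproducing property $\int_\Omega\abs{K(x,y)}^2\dd\mu(y)=K(x,x)=N$. For the projective ensemble only the modulus $\abs{K}$ enters, and by \eqref{eq:kernel-projective} it depends only on $\dist(x,y)$: with the normalization $\kappa=1$ we have $\abs{\innerprod{p}{q}}/(\norm{p}\norm{q})=\cos\dist(x,y)$. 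Hence
\[
\Variance(N_A)=N^2\int_A\int_{\Omega\setminus A}\cos^{2L}\dist(x,y)\dd\mu(y)\dd\mu(x).
\]

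Next we reduce to a one-dimensional integral. By homogeneity, $\int_\Omega\cos^{2L}\dist(x,y)\dd\mu(y)=1/N$. In $\CP^d$ with $\vol=1$ the distance density from a fixed point is $c_d\sin^{2d-1}t\cos t$ on $[0,\pi/2]$. The function $f(t)=N\,c_d\cos^{2L+1}t\sin^{2d-1}t$ is then a probability density concentrated at scale $t\sim L^{-1/2}$, with Gaussian tails of the form $e^{-Lt^2}$. Let $A=B(x_0,r)$. The inner integral is the probability that a point at distance $s\sim f$ from $x$, in a uniformly random direction, leaves $A$. This is bounded by the probability that $\dist(x,\partial A)<s$, so
\[
\Variance(N_A)\le N\int_A \Probability_{s\sim f}\bigl(s>\dist(x,\partial A)\bigr)\dd\mu(x).
\]
We may assume $r\le\pi/4$: otherwise we replace $A$ by its complement, which has the same variance and is itself a ball of radius $\pi/2-r$ around the cut locus, up to a set of measure zero. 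If it proves awkward to treat the complement as a ball, we instead handle the tube around the boundary directly.

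We then estimate using the coarea formula over the level sets $\dist(x,\partial A)=u$. Their area is at most $C\,\vol_{D-1}(\partial A)\le C\,r^{D-1}$ for $u$ in the relevant range, with the caveat of the geometry near the cut locus. This gives
\[
\Variance(N_A)\le CN\,\vol_{D-1}(\partial A)\int_0^\infty \Probability(s>u)\dd u=CN\,\vol_{D-1}(\partial A)\,\mathbb E[s].
\]
Here $\mathbb E[s]\asymp L^{-1/2}$, which follows from a Beta-integral computation of $\int_0^{\pi/2} t\cos^{2L+1}t\sin^{2d-1}t\,dt$. Since the boundary area is bounded uniformly in $r$, and $N\sim L^d$ by \eqref{eq:dyL}, we obtain $\Variance(N_A)\lesssim L^{d-1/2}=N^{1-1/(2d)}=N^{1-1/D}$.

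The main obstacle is making the geometric step rigorous uniformly in $r$. The area of the level sets of the distance to $\partial A$ must be bounded for all $u$, not only for small $u$, and the degenerate regimes $r\to0$ and $r\to\pi/2$, where the sphere collapses onto the cut locus $\CP^{d-1}$, must be handled. For this we will use the explicit volume element $\sin^{2d-1}t\cos t$: the level sets are themselves distance spheres of radius $r\pm u$, whose areas are at most $C\sin^{2d-1}\cos\le C$. If $r\lesssim L^{-1/2}$, the trivial bound $\Variance(N_A)\le N\vol(A)\lesssim Nr^{D}$ already suffices.
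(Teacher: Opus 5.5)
Your argument is correct and is essentially the paper's. Your observation that $y\notin A$ forces $\dist(x,y)\ge r-\dist(x,x_0)$ is exactly the paper's inclusion $A^c\subseteq B(p,r-s)^c$, which reduces the variance to the same radial double integral, and your layer-cake identity $\int_0^\infty\Probability(s>u)\dd u=\mathbb E[s]\asymp L^{-1/2}$ (via the Beta integral) replaces the paper's Gaussian bound $\cos^2\theta\le e^{-\theta^2/2}$ and incomplete-gamma computation. The detour through $r\le\pi/4$, the complement and the factor $\vol_{D-1}(\partial A)$ is unnecessary, and for $d\ge2$ the complement is a tube around a copy of $\CP^{d-1}$, not a metric ball. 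The uniform bound $2d\sin^{2d-1}t\cos t\le 2d$ on the areas of the distance spheres, which you note at the end, already covers every $r\in(0,\pi/2)$ with no separate small-$r$ case needed.
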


As a direct consequence of \cref{th:maintool,th:Variancep}, we obtain the following corollary.

\begin{corollary}\label{cor:Discrepancy-projective}
    For every $M>0$, the discrepancy of $N=\binom{d+L}{d}$ points $x_1,\dotsc,x_N\in \CP^d$ drawn from the projective ensemble satisfies
    \begin{equation*}
        \Discrepancy(x_1,\ldots,x_N)= O\biggl(\bigl(N^{1-1/D}\bigr)^{1/2}\sqrt{\log N}\biggr)
    \end{equation*}
    with probability at least $1-N^{-M}$.
\end{corollary}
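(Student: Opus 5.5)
The corollary follows by combining \cref{th:maintool} with \cref{th:Variancep}. I will apply \cref{th:maintool} to the projective ensemble on $\CP^d$, which is legitimate once we check that this \gls{dpp} is homogeneous. By \eqref{eq:kernel-projective}, $\abs{K_\star^{(N,d)}(p,q)}$ depends only on $\abs{\innerprod{p/\norm{p}}{q/\norm{q}}}$, hence only on $\dist(p,q)$, so it is invariant under the isometry group of $\CP^d$. The joint intensities of a projection \gls{dpp} depend on the kernel only up to conjugation by a unimodular function: $K(x,y)\mapsto u(x)K(x,y)\overline{u(y)}$ leaves every determinant $\det(K(x_i,x_j))$ unchanged. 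It is therefore enough that $\abs{K}$, together with the determinants, is isometry-invariant. Alternatively, I will simply use the fact that the construction in \cite{BeltranEtayo} is invariant under the unitary group, which acts transitively as isometries. Either way, homogeneity gives $K(x,x)=N$ and $\mathbb{E}(N_A)=N\vol(A)$, which is all \cref{th:maintool} uses.

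Fix $M>0$. By \cref{th:maintool}, with probability at least $1-N^{-M}$ we have
\[
\Discrepancy(x_1,\dotsc,x_N)\le C\Bigl(\log N+\bigl(\log N\,\sup_{A\in\mathcal A}\Variance(N_A)\bigr)^{1/2}\Bigr).
\]
\cref{th:Variancep} gives $\sup_{A\in\mathcal A}\Variance(N_A)\le KN^{1-1/D}$, with $K$ uniform in $A$; this uniformity is exactly what allows the supremum to be bounded. Substituting, the second term is at most $C\sqrt{K}\,(N^{1-1/D})^{1/2}\sqrt{\log N}$.

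It remains to absorb the first term. Since $D=2d\ge 2$, we have $N^{1-1/D}\ge N^{1/2}$. Hence $\log N=o\bigl(N^{1/4}\sqrt{\log N}\bigr)$, so $\log N\le C'(N^{1-1/D})^{1/2}\sqrt{\log N}$ for all $N\ge 2$, with a constant depending only on $d$. Combining the two bounds gives the claimed $O\bigl((N^{1-1/D})^{1/2}\sqrt{\log N}\bigr)$ estimate with probability at least $1-N^{-M}$. The implied constant depends on $M$ and $d$ through \cref{th:maintool} and \cref{th:Variancep}.

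The only non-mechanical point is verifying homogeneity of the projective ensemble, and I expect it to be easy via the unitary invariance noted above. Everything else is direct substitution.
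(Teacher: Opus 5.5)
Your proposal is correct and follows the paper's argument: the paper states this corollary as a direct consequence of \cref{th:maintool,th:Variancep}, and your absorption of the $\log N$ term via $N^{1-1/D}\ge N^{1/2}$ just makes explicit what the paper leaves implicit. The homogeneity discussion does more than is needed (and invariance of $\abs{K}$ alone would not settle it, since phases of cyclic products such as $K(x_1,x_2)K(x_2,x_3)K(x_3,x_1)$ enter the higher intensities); all that \cref{th:maintool} actually uses is that the process is a projection \gls{dpp} with $K(x,x)=\abs{K(x,x)}=N$, which is immediate from \eqref{eq:kernel-projective}.
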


The paper is organized as follows. First, in \cref{sec:general} we prove \cref{th:maintool}, our general result. Next, in \cref{sec:variancesh} we compute the variance for the harmonic ensemble and prove \cref{th:Variance-harmonic}. Finally, in \cref{sec:variancesproj} we compute the variance for the projective ensemble and prove \cref{th:Variancep}.

\section{A general approach to bound the discrepancy of homogeneous \glspl{dpp}}\label{sec:general}

In order to prove \cref{th:maintool}, we need the following tools.

\begin{definition}
    An \emph{Ahlfors regular metric measure space}, or simply Ahlfors regular space, of dimension $D>0$ is a complete metric space $X$ with a Borel measure $\mu$ with the property that there are two positive constants $c_1$ and $c_2$ such that all open metric balls $B(x,r)$ with $x\in X$ and $0<r<\diam(X)$ satisfy the bounds
    \begin{equation*}
        c_1r^D\leq \mu(B(x,r))\leq c_2r^D.
    \end{equation*}
\end{definition}

\begin{theorem}\label{th:Aprime}
Let $X$ be an Ahlfors regular compact space of dimension $D$. Then there exists a constant $C>0$
such that for any $n\in\mathbb N$ there exists a finite collection $\mathcal A'$ of metric balls \embparen{including the empty ball $\emptyset$} such that
\begin{equation*}
\card{\mathcal A'}\le C n^{D+1}
\end{equation*}
and such that, for any $x\in X$ and any $r\geq 0$, there exist $A_1=B(x_1,r_1),A_2=B(x_2,r_2)\in \mathcal A'$ with
\begin{equation*}
A_1\subseteq B(x,r)\subseteq A_2
\qquad\text{and}\qquad
r_2-r_1\le \frac1n .
\end{equation*}
\end{theorem}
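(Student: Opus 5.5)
We build $\mathcal A'$ from a maximal separated net of centers together with a uniform grid of radii. Set $\delta=1/(2n)$ and let $\{y_1,\dotsc,y_m\}$ be a maximal $\delta$-separated subset of $X$.

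\emph{Step 1: counting the net.} The balls $B(y_i,\delta/2)$ are pairwise disjoint. By Ahlfors regularity each of them has measure at least $c_1(\delta/2)^D$; when $\delta/2\ge\diam(X)$ the set is a single point, so this case is trivial. Since $\mu(X)<\infty$ by compactness, this gives
\[
m\le \frac{\mu(X)}{c_1}(4n)^D .
\]
By maximality, every $x\in X$ lies within distance $\delta$ of some $y_i$.

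\emph{Step 2: the radii.} Let $R=\diam(X)<\infty$ and use the radii $s_k=k\delta$ for $0\le k\le \lceil R/\delta\rceil+2$, which is $O(n)$ values. Define $\mathcal A'$ to consist of all balls $B(y_i,s_k)$ together with $\emptyset$. Then $\card{\mathcal A'}\le m\cdot O(n)+1\le Cn^{D+1}$.

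\emph{Step 3: sandwiching.} Given $x$ and $r\ge0$, pick $y_i$ with $d(x,y_i)<\delta$ (or $\le\delta$).

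For the inner ball, take the largest $k$ with $s_k\le r-\delta$, and set $A_1=B(y_i,s_k)$. If $r<\delta$, take $A_1=\emptyset$ and interpret $r_1=0$, i.e. the empty ball counts as radius $0$. By the triangle inequality, $A_1\subseteq B(x,s_k+\delta)\subseteq B(x,r)$.

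For the outer ball, take the smallest $k'$ with $s_{k'}\ge r+\delta$, capping at $k'$ with $s_{k'}>R$ so that the ball is all of $X$. Set $A_2=B(y_i,s_{k'})\supseteq B(x,r)$.

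Then $r_2-r_1\le (r+2\delta)-(r-2\delta)=4\delta$. Choosing $\delta=1/(4n)$ instead of $1/(2n)$ gives $r_2-r_1\le 1/n$ and only changes constants. In the edge cases $r\ge R$ and $r<\delta$ the bound still holds with $r_1=0$, because then $r_2\le 2\delta+\delta$.

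\emph{Main obstacle.} The delicate point is the bookkeeping at the boundary cases. The Ahlfors lower bound is only assumed for $r<\diam X$, so the case $\delta\ge\diam X$ must be handled separately; in that case $m=1$ trivially. One also needs a convention for the radius of $\emptyset$ and for open versus closed balls when $d(x,y_i)=\delta$. Using the strict inequality $d(x,y_i)<2\delta$, which maximality guarantees with a margin, resolves the latter at the cost of a constant.
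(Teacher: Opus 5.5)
Your proof is correct and is essentially the paper's argument: a maximal separated net of centers, counted via disjoint small balls and the Ahlfors lower bound, times an arithmetic grid of $O(n)$ radii, with the sandwich obtained from the triangle inequality. The paper couples the radii as $r_2=r_1+2\delta$ with net scale $\varepsilon=\delta/2$, whereas you round $r-\delta$ down and $r+\delta$ up independently and rescale $\delta$, which only changes constants. One small slip: your edge-case remark is wrong for $r\ge R$ (there $r_1\neq 0$). That case is nevertheless already covered by your general rounding argument, since the grid extends to at least $R+2\delta$ and any capped ball is all of $X$.
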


\begin{proof}
Since $X$ is compact, it has finite total volume. 
Hence, for every $\varepsilon>0$ there exists a maximal $\varepsilon$-separated net $S\subset X$. This means that, on the one hand, $d(x_i,x_j)\geq\varepsilon$ for every two different points in $S$, 
and, on the other hand, if another point is added to $S$, this separation property is lost. The existence of this maximal set is clear: take any point $x_1\in X$, and add any other point $x_2$ which is $\varepsilon$ far from $x_1$, then any other one that is $\varepsilon$ far from both $x_1,x_2$ and continue this process. 
Since the points are $\varepsilon$ separated, the open balls of radius $\varepsilon/2$ centered at points in the sequence are disjoint, and after $n$ steps we have
\[
\vol(X)\geq\vol\Bigl(\bigunion_{i=1}^nB(x_i,\varepsilon/2)\Bigr)=\sum_{i=1}^n \vol(B(x_i,\varepsilon/2))\geq c_1 n \varepsilon^D/2^D,
\]
which means that the process is finite and the family has at most $C\varepsilon^{-D}$ points, with $C$ a constant. Note that in the last inequality we have used the Ahlfors regularity of $X$. Moreover, for any $x\in X$ we have $d(x,S)<\varepsilon$, since otherwise we could add the point to the family and it would not be maximal. Therefore, choosing $\varepsilon=1/(4n)$, we have:
\begin{equation*}
    \card{S}\le C n^D .
\end{equation*}
Let $\diam(X)$ denote the diameter of $X$ and define
\begin{equation*}
    \delta=\frac{1}{2n}=2\varepsilon.
\end{equation*}
Define the discrete set of radii
\begin{equation*}
\mathcal R \defined \set[\bigg]{0,\delta,2\delta,\dotsc,K\delta},
\end{equation*}
where $K$ is the minimal number that satisfies $K\delta \geq \diam(X)+1/n$. Then $\card{\mathcal R}=O(n)$.
Define
\begin{equation*}
\mathcal A' \defined \set{B(s,r) \st s\in S,\ r\in\mathcal R} \union \set{\emptyset}.
\end{equation*}
This family is finite and its cardinality satisfies
\begin{equation*}
    \card{\mathcal A'} \le \card{S}\card{\mathcal R} +1
    \leq C' n^D \cdot O(n)
    = O(n^{D+1}).
\end{equation*}
We have constructed $\mathcal A'$; now, we verify the claim of the theorem. 
Let $x\in X$ and $r\geq 0$ be arbitrary. Choose $s\in S$ such that $d(x,s)<\varepsilon$. If $r>\diam(X)$, then $B(x,r)=X$ and we take $A_1=A_2=B(s,K\delta)$. Otherwise, define
\begin{equation*}
    t\defined\max\set{r-\varepsilon,0},
    \qquad
    r_1 \defined \delta\floor[\bigg]{\frac{t}{\delta}},
    \qquad
    r_2 \defined r_1 + 2\delta .
\end{equation*}
Then $r_1,r_2\in\mathcal R$ and
\begin{equation*}
    r_2-r_1 = 2\delta = \frac1n .
\end{equation*}
If $r_1 = 0$, then take $A_1 = \emptyset$. 
Otherwise, let $y\in B(s,r_1)$. Then
\begin{equation*}
    d(x,y)
    \le d(x,s)+d(s,y)
    < \varepsilon + r_1
    \le \varepsilon + t
    \le r.
\end{equation*}
Hence
\begin{equation*}
    B(s,r_1)\subseteq B(x,r).
\end{equation*}
Now let $y\in B(x,r)$. Then
\begin{equation*}
    d(s,y)
    \le d(s,x)+d(x,y)
    < \varepsilon + r .
\end{equation*}
Since $\floor{u}>u-1$, we have
\begin{equation*}
r_1
=
\delta \floor[\bigg]{\frac{t}{\delta}}
> t-\delta
= r-\varepsilon-\delta .
\end{equation*}
Therefore
\begin{equation*}
r_2=r_1+2\delta> r-\varepsilon+\delta =r+\varepsilon,
\end{equation*}
and thus
\begin{equation*}
B(x,r)\subseteq B(s,r_2).
\end{equation*}
Setting
\begin{equation*}
A_1:=B(s,r_1),
\qquad
A_2:=B(s,r_2),
\end{equation*}
we obtain
\begin{equation*}
A_1\subseteq B(x,r)\subseteq A_2
\qquad\text{and}\qquad
r_2-r_1\le \frac1n .
\end{equation*}
This completes the proof. 
\end{proof}

In order to obtain upper bounds for the discrepancy we use a version of Bernstein's inequality. As above, given a metric ball $A\subseteq \Omega$ and a collection of $N$ points in $\Omega$, we denote by $N_A$ the number of points in the collection that fall inside $A$.

\begin{theorem}[Bernstein--Chernoff bound]\label{th:Bernstein}
Let $\mathcal X$ be a homogeneous \gls{dpp} on $\Omega$, and let $A$ be a metric ball in $\Omega$. Then,
\begin{equation*}
    \Probability\bigl(\abs{N_A - \mathbb{E}(N_A)} \geq t\bigr) \leq    \begin{cases}
        2e^{-t/4}&t\geq \Variance(N_A),\\
        2e^{-t^2/(4\Variance(N_A))}&t\leq \Variance(N_A).
    \end{cases}
\end{equation*}
\end{theorem}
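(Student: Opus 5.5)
The plan is to use the classical structural fact for determinantal processes with projection-type (or more generally Hermitian, trace-class, $0\le K\le I$) kernels. For any measurable $A$, the counting variable $N_A$ has the same distribution as a sum of independent Bernoulli random variables $\sum_i \xi_i$. The parameters $\lambda_i\in[0,1]$ of the $\xi_i$ are the eigenvalues of the restricted operator $K_A=\chi_A K_H\chi_A$ acting on $L^2(A)$; see \cite[Theorem 4.5.3]{GAF}. Since $K_H$ is the reproducing kernel of the $N$-dimensional space $H$, the operator $K_A$ is self-adjoint, positive and of finite rank, and satisfies $0\le K_A\le I$. Hence at most $N$ of the $\lambda_i$ are nonzero. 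With this in hand, $\mathbb{E}(N_A)=\sum\lambda_i$ and $\Variance(N_A)=\sum\lambda_i(1-\lambda_i)$, and the question reduces to a concentration inequality for sums of independent bounded variables. Homogeneity is not really needed beyond guaranteeing that the process is of this type.

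Write $X_i=\xi_i-\lambda_i$, so that $\abs{X_i}\le1$ and $\mathbb{E}X_i=0$, and set $\sigma^2=\Variance(N_A)$. For $0\le s\le1$ the elementary inequality $e^{u}\le 1+u+u^2$, valid for $\abs{u}\le1$, gives
\[
\mathbb{E}e^{sX_i}\le 1+s^2\lambda_i(1-\lambda_i)\le e^{s^2\lambda_i(1-\lambda_i)} .
\]
By independence, $\mathbb{E}e^{s(N_A-\mathbb{E}N_A)}\le e^{s^2\sigma^2}$, and the same bound holds with $-s$ in place of $s$. Markov's inequality then yields
\[
\Probability(\abs{N_A-\mathbb{E}N_A}\ge t)\le 2e^{-st+s^2\sigma^2}\qquad\text{for all } s\in[0,1].
\]

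The two regimes in the statement come from optimising over $s$. When $t\le\sigma^2$ (the statement allows $2\sigma^2$), choose $s=t/(2\sigma^2)\le 1/2$. The exponent becomes $-t^2/(2\sigma^2)+t^2/(4\sigma^2)=-t^2/(4\sigma^2)$, which is the second case. When $t\ge\sigma^2$, the unconstrained optimum would violate $s\le1/2$, so take $s=1/2$ instead. The exponent is $-t/2+\sigma^2/4\le -t/2+t/4=-t/4$, which is the first case. If $\sigma^2=0$, then $N_A$ is almost surely constant and the bound is trivial.

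The main obstacle is justifying the Bernoulli-sum representation in the present generality. This means checking that the kernel restricted to a ball defines a trace-class operator with spectrum in $[0,1]$, and invoking the Hough--Krishnapur--Peres--Virág theorem. Everything after that is a routine Chernoff computation, where the only care needed is the constraint $\abs{s}\le1$, required for the quadratic bound on the moment generating function.
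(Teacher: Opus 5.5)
Your proposal is correct and follows the paper's route. The paper likewise represents $N_A$ as a sum of independent Bernoulli variables via the restriction of the \gls{dpp} to $A$ and the Hough--Krishnapur--Peres--Vir\'ag theorem, and then applies a Bernstein-type bound for independent, mean-zero summands bounded by $1$ with total variance $\Variance(N_A)$. The only difference is that the paper cites Beck's Lemma~2 for that inequality, whereas you derive it directly from the bound $e^{u}\le 1+u+u^2$ on $\abs{u}\le 1$ with the choices $s=t/(2\Variance(N_A))$ and $s=1/2$, and this derivation is correct.
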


\begin{proof}
    This is a simplified version of \cite[Lemma 2]{beck}. Indeed, from \cite[Theorem 4.5.3 and Remark 4.2.5]{GAF} we have that the restriction of $\mathcal X$ to $A$ is a determinantal point process, and $N_A$ is equal in distribution to a sum of independent Bernoulli random variables $\eta_1,\dotsc,\eta_N$ with parameters $p_1,\dotsc,p_N$ that we do not need to compute. Here, $N$ is the total number of points sampled by the \gls{dpp}. Then,
    \begin{equation*}
    N_A - \mathbb{E}(N_A)=\sum_{i=1}^N(\eta_i-p_i).
    \end{equation*}
    The random variables $\eta_i-p_i$ are independent, have mean zero, and their absolute value is at most $1$. Therefore, we can apply \cite[Lemma 2]{beck}, which gives
    \[
    \Probability\biggl(\abs[\bigg]{\sum_{i=1}^N(\eta_i-p_i)}\geq t\biggr)\leq
    \begin{cases}
        2e^{-t/4}&t\geq\beta^2,\\
        2e^{-t^2/(4\beta^2)}&t\leq\beta^2,
    \end{cases}
    \]
    where $\beta^2$ is the variance of $\sum_{i=1}^N(\eta_i-p_i)$, that is, $\beta^2=\Variance(N_A)$. This completes the proof.
\end{proof}

\begin{proof}[Proof of Theorem \ref{th:maintool}]
Note that $M>0$ is fixed during the proof. Let $\mathcal A'$ denote the collection of at most $cN^c$ balls provided by \cref{th:Aprime} 
(the theorem states that there exists a set of $CN^{D+1}$ balls, it suffices to take $c=\max(C,D+1)$).
For any fixed $A'\in\mathcal A'$, we apply \cref{th:Bernstein} with the following choices for the parameter $t$:
\begin{itemize}
    \item If $\Variance(N_{A'})> 4(M+c)\log N+4\log(4c)$, set
    \begin{equation*}
t_{A'}=2\sqrt{\Variance(N_{A'})}\sqrt{(M+c)\log N+\log(4c)}.
    \end{equation*}
    Note that
    \[
    \frac{t_{A'}}{\Variance(N_{A'})}=\frac{2\sqrt{(M+c)\log N+\log(4c)}}{\sqrt{\Variance(N_{A'})}}<1.
    \]
    From \cref{th:Bernstein} we have
\[
\abs[\big]{N_{A'}-\mathbb E (N_{A'})}\leq t_{A'}= O\Bigl(\bigl(\log N\sup_{A\in\mathcal A}\Variance(N_A)\bigr)^{1/2}\Bigr)
\]
with probability at least
\[
1-2e^{-(M+c)\log N-\log(4c)}=1-\frac{1}{2cN^{M+c}}.
\]

\item Otherwise, set
\begin{equation*}
    t_{A'}=4\bigl((M+c)\log N+\log(4c)\bigr)\geq \Variance(N_{A'}).
\end{equation*}
Again from \cref{th:Bernstein} we have
\[
\abs[\big]{N_{A'}-\mathbb E (N_{A'})}\leq t_{A'}= O(\log N)
\]
with probability at least
\[
1-2e^{-(M+c)\log N-\log(4c)}=1-\frac{1}{2cN^{M+c}}.
\]
\end{itemize}
We have proved that, given any metric ball $A'$ in $\mathcal{A'}$,
\[
\abs[\big]{N_{A'}-\mathbb E (N_{A'})}= O\Bigl(\log N+\bigl(\log N\sup_{A\in\mathcal A}\Variance(N_A)\bigr)^{1/2}\Bigr)
\]
with probability at least $1-\frac{1}{2cN^{M+c}}$.

Since $\mathcal A'$ has at most $cN^c$ elements, we conclude that, with probability at least
\[
1-\frac{cN^c}{2cN^{M+c}}=1-\frac{N^{-M}}{2},
\]
the following inequality holds:
\[
\abs{N_{A'}-\expectation{}{N_{A'}}}\leq t\qquad \forall A'\in\mathcal A',
\]
where
\[
t=O\Bigl(\log N+\bigl(\log N\sup_{A\in\mathcal A}\Variance(N_A)\bigr)^{1/2}\Bigr).
\]
Now, let $A$ be any ball in $\Omega$, and let $r$ be its radius. From the definition of $\mathcal A'$, there exist balls $A'_1,A'_2\in\mathcal A'$ with respective radii $r'_1,r'_2$ such that $A'_1\subseteq A\subseteq A'_2$ and $r'_2-r'_1\leq 1/N$. Then, with probability at least $1-N^{-M}/2$,
\begin{align*}
    N_{A}-\expectation{}{N_{A}}&=N_A-N{\vol(A)}\\
    &\leq N_{A'_2}-N{\vol(A'_1)} \\
    &\leq \left|N_{A'_2}-N{\vol(A'_2)}\right|+N\abs[\bigg]{{\vol(A'_2)}-{\vol(A'_1)}} \\
    &\leq   \abs[\big]{N_{A'_2}-\expectation{}{N_{A'_2}}}+N\abs[\bigg]{{\vol(A'_2)}-{\vol(A'_1)}} \\
    &\leq  t+ CN(r_{2}'^{D}-r_1'^D)\\
    &\leq  t+ CN(r_2'-r_1')\\
    &\leq  t+ C,
\end{align*}
for some constant $C>0$ (which may change from one expression to the next one). Here we have used the Ahlfors regularity of the space in the fourth inequality and the fact that $\sum_{k=0}^{D-1} r_2'^{D-1-k} r_1'^k \leq \text{Constant}$ (since the manifold has finite diameter) in the previous to last step. 

Similarly, with probability at least $1-N^{-M}/2$,
\begin{align*}
    \expectation{}{N_{A}}-N_{A}&=N{\vol(A)}-N_A\\
    &\leq N{\vol(A'_2)} -N_{A'_1}\\
    &\leq  \abs[\bigg]{N_{A'_1}-N{\vol(A'_1)}}
    +N\abs[\bigg]{{\vol(A'_1)}-{\vol(A'_2)}} \\
    &= \abs[\big]{N_{A'_1}-\expectation{}{N_{A'_1}}}+N\abs[\bigg]{{\vol(A'_2)}-{\vol(A'_1)}} \\
    &\leq t+ CN(r_2'^{D}-r_1'^D)\\
    &\leq t+ CN(r_2'-r_1')\\
    &\leq t+ C.
\end{align*}
Combining the above estimates, for any ball $A\subseteq X$ we have
\begin{equation*}
    \abs[\big]{N_{A}-\mathbb E (N_{A})}\leq t+C
\end{equation*}
with probability at least $1-N^{-M}$. That is, the discrepancy is at most $t+C$, as claimed. 
\end{proof}

\section{The variance of the harmonic ensemble on the two-point homogeneous spaces. Proof of \cref{th:Variance-harmonic}}\label{sec:variancesh} 

In this section, we prove \cref{th:Variance-harmonic}. We follow the ideas in \cite[Proof of Proposition 2]{BeltranMarzoOrtega}. By \cite[Formula 28]{Virag}, the variance of $N_A$ is given by
\begin{equation*} 
    \Variance(N_A)=\int_{A}\int_{A^c}\abs[\big]{K_L^{(\alpha,\beta)}(p,q)}^2\dd q \dd p,
\end{equation*}
where $A^c$ is the complement of $A$. Let $a$ and $r$ denote the center and radius of $A$, respectively, with $r\in(0,\pi/(2\kappa))$.

For any $p\in A$, if we set $s=\dist(p,a)$, we have
\begin{equation*}
    A^c\subseteq B(p,r-s)^c.
\end{equation*}
Therefore,
\begin{equation*}
    \Variance(N_A)
    \leq 
    \biggl(
    \frac{\Pochhammer{\alpha+\beta+2}{L}}{\Pochhammer{\beta+1}{L}}
    \biggr)^2
    \int_{p\in A}\int_{q\in B(p,r-s)^c}\abs[\bigg]{\Jacobi{L}{\alpha+1}{\beta}(\cos(2\kappa\dist(p,q))}^{2}\dd q \dd p.
\end{equation*}
The inner integral depends only on the distance from $q$ to $p$, and can therefore be reduced to a univariate integral (see \cite[Eq. (2.8)]{andersonetal} for the weighted measure in each projective space, or \cite[Eq. (5.4)]{BGG} for all cases. Note that this convention is consistent with our normalization $\vol(\Omega)=1$). Then, the inner integral is
\begin{equation*}
    \frac{2\kappa\Gamma(\alpha + \beta + 2)}{\Gamma(\alpha + 1)\Gamma(\beta +1)}
    \int_{r-s}^{\pi/(2\kappa)}
    \abs[\bigg]{\Jacobi{L}{\alpha+1}{\beta}(\cos(2\kappa\theta))}^{2}
    (\sin\kappa\theta)^{2\alpha+1}(\cos\kappa\theta)^{2\beta+1} \dd\theta.
\end{equation*}
Since $s=\dist(p,a)$, the outer integral also depends only on the distance from $p$ to the fixed point $a$, and can likewise be written as a univariate integral. Overall, we have
\begin{multline*}
    \Variance(N_A)
    \leq
    \biggl(
    \frac{\Pochhammer{\alpha+\beta+2}{L}}{\Pochhammer{\beta+1}{L}}
    \frac{2\kappa\Gamma(\alpha + \beta + 2)}{\Gamma(\alpha + 1)\Gamma(\beta +1)}
    \biggr)^2
    \\
    \qquad\times
    \int_{0}^r
    (\sin\kappa\phi)^{2\alpha+1} (\cos\kappa\phi)^{2\beta+1} 
    \int_{r-\phi}^{\pi/(2\kappa)}
    \abs[\bigg]{\Jacobi{L}{\alpha+1}{\beta}(\cos(2\kappa\theta))}^{2}\\
    \times
    (\sin\kappa\theta)^{2\alpha+1}(\cos\kappa\theta)^{2\beta+1} \dd\theta\dd \phi.
\end{multline*}
A quick change of variables gives
\begin{multline}\label{eq:Var}
    \Variance(N_A)
    \leq
    \biggl(
    \frac{\Pochhammer{\alpha+\beta+2}{L}}{\Pochhammer{\beta+1}{L}}
    \frac{2\Gamma(\alpha + \beta + 2)}{\Gamma(\alpha + 1)\Gamma(\beta +1)}
    \biggr)^2
    \\
    \times
    \int_{0}^{\kappa r}
    (\sin\phi)^{2\alpha+1} (\cos\phi)^{2\beta+1} 
    \int_{\kappa r-\phi}^{\pi/2}
    \abs[\bigg]{\Jacobi{L}{\alpha+1}{\beta}(\cos(2\theta))}^{2}\\
    \times
    (\sin\theta)^{2\alpha+1}(\cos\theta)^{2\beta+1} \dd\theta\dd \phi.
\end{multline}
Here,
\begin{equation*}
    \frac{2\Gamma(\alpha + \beta + 2)}{\Gamma(\alpha + 1)\Gamma(\beta +1)}
\end{equation*}
is a constant independent of $L$. Hence, we can neglect it. Also, we have
\begin{equation*}
    \biggl(
    \frac{\Pochhammer{\alpha+\beta+2}{L}}{\Pochhammer{\beta+1}{L}}
    \biggr)^2\sim L^{2\alpha+2}\sim N.
\end{equation*}
Now, we claim that the double integral in \eqref{eq:Var} satisfies
\begin{align*}
    I&=\int_{0}^{\kappa r}
    \int_{\kappa r-\phi}^{\pi/2}
    \abs[\bigg]{\Jacobi{L}{\alpha+1}{\beta}(\cos(2\theta))}^{2}
    (\sin\phi)^{2\alpha+1} (\cos\phi)^{2\beta+1} \\
    &\hspace{6cm}\times
    (\sin\theta)^{2\alpha+1}(\cos\theta)^{2\beta+1} \dd\theta\dd \phi\\
    &\leq KL^{-1}\log L,
\end{align*}
for some constant $K>0$. Assuming this claim, we have
\[
\Variance(N_A)= O(NL^{-1}\log N)=O(N^{1-1/D}\log N),
\]
and \cref{th:Variance-harmonic} follows. Let us then prove the claim on $I$. Using that $\cos \phi\leq 1$ and $\sin\phi\le1$, we have
\begin{equation}\label{eq:integral}
    I\leq \int_{0}^{\kappa r}
    \int_{\kappa r-\phi}^{\pi/2}
    \abs[\bigg]{\Jacobi{L}{\alpha+1}{\beta}(\cos(2\theta))}^{2}
    (\sin\theta)^{2\alpha+1} (\cos\theta)^{2\beta+1}\dd\theta\dd \phi.
\end{equation}
To estimate the double integral in \eqref{eq:integral}, we will use the following bounds for the Jacobi polynomials from 
\cite[Eq. (7.32.5) {and Theorem 8.21.8}  along with Eq.~(4.1.3)]{szego1975orthogonal}:
\begin{equation*}
    \Jacobi{L}{\alpha+1}{\beta}(\cos2\theta)=\begin{cases}
         O(L^{\alpha+1}) & 0\leq \theta\leq L^{-1},\\
        (\sin\theta)^{-\alpha-3/2}(\cos\theta)^{-\beta-1/2}O(L^{-1/2}) & L^{-1}\leq \theta\leq \pi/2-L^{-1},\\
        O(L^{\beta}) & \pi/2-L^{-1}\leq \theta\leq \pi/2.
    \end{cases}
\end{equation*}
Now, consider the subdivision of the region of integration shown in \cref{fig:subdivision}, where
\begin{align*}
    R_1&=\set{(\phi,\theta)\st 0\leq \phi\leq \kappa r,\ {\max(\pi/2-L^{-1},\kappa r-\phi)}\leq \theta\leq \pi/2},\\
    R_2&=\set{(\phi,\theta)\st {\max(0,\kappa r-\pi/2+L^{-1})}\leq \phi\leq \kappa r-L^{-1},\ \kappa r-\phi\leq \theta\leq \pi/2-L^{-1}},\\
    R_3&=\set{(\phi,\theta)\st \kappa r-L^{-1}\leq \phi\leq \kappa r,\ L^{-1}\leq \theta\leq \pi/2-L^{-1}},\\
    R_4&=\set{(\phi,\theta)\st \kappa r-L^{-1}\leq \phi\leq \kappa r,\ \kappa r-\phi\leq \theta\leq L^{-1}}.
\end{align*}

\begin{figure}[htbp]
\centering
\begin{tikzpicture}[scale=0.9]
        \def\R{0.75\textwidth} 
        \def\t{0.65\textwidth}
        \def\r{0.55\textwidth} 
        \def\s{0.45\textwidth} 
        \def\u{0.1\textwidth}
        \coordinate (A0) at (0,0);
        \coordinate (A1) at (\R,0);
        \coordinate (A2) at (0,\R);
        \coordinate (A3) at (0,\r);
        \coordinate (A4) at (\r,0);
        \coordinate (A5) at (\s,0);
        \coordinate (A6) at (0,\s);
        \coordinate (A7) at (\t,0);
        \coordinate (A8) at (0,\t);
        \coordinate (A9) at (\u,0);
        \coordinate (A10) at (0,\u);
        \coordinate (A) at (\s,\u);
        \coordinate (B) at (\r,\u);
        \coordinate (C) at (\r,0);
        \draw (A0) node[below left]{$0$};
        \draw[->] (-0.5,0)--(0.85\textwidth,0) node[below]{$\phi$};
        \draw[->] (0,-0.5)--(0,0.85\textwidth) node[left]{$\theta$};
        \draw[loosely dashed, color=gray!80] (0,\u) -- (\s,\u);
        \draw[loosely dashed, color=gray!80] (\s,0) -- (\s,\u);
        \draw (0,0) rectangle (\R,\R);
        \draw[thick, fill=Green!15] (0,\r) -- (\r,0) -- (\r,\R) -- (0,\R) -- cycle;
        \draw (A8) rectangle (\r,\R) node[pos=0.5]{$R_{1}$};
        \draw (\s,\u) rectangle (\r,\t) node[pos=0.5]{$R_{3}$};
        \draw (\s,\u) -- (\r,\u) -- (\r,0) -- cycle;
        \node at (barycentric cs:A=1,B=1,C=1) {$R_4$};
        \draw (0,\r) -- (\s,\u) -- (\s,\t) -- (0,\t) -- cycle;
        \node at (0.275\textwidth,\r) {$R_2$};
        \draw (A1) node[below]{$\pi/2$};
        \draw (A2) node[left]{$\pi/2$};
        \draw (A3) node[left]{$\kappa r$};
        \draw (A4) node[below]{$\kappa r$};
        \draw (A5) node[below]{$\kappa r-L^{-1}$};
        \draw (A8) node[left]{$\pi/2-L^{-1}$};
        \draw (A9) node[below]{$L^{-1}$};
        \draw (A10) node[left]{$L^{-1}$};
    \end{tikzpicture}
\caption{Subdivision of the region of integration in \eqref{eq:integral} when $L$ is large enough.}
\label{fig:subdivision}
\end{figure}

\begin{lemma}\label{lemma:R1}
    The following estimate holds:
    \begin{equation*}
        \iint_{R_1}\abs[\Big]{\Jacobi{L}{\alpha+1}{\beta}(\cos(2\theta))}^{2}
        (\sin\theta)^{2\alpha+1}
        (\cos\theta)^{2\beta+1}\dd\theta\dd \phi=O(L^{-2}).
    \end{equation*}
\end{lemma}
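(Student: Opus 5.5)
On $R_1$ we have $\theta\in[\pi/2-L^{-1},\pi/2]$. The plan is to use the third case of the Szegő bound quoted above, $\Jacobi{L}{\alpha+1}{\beta}(\cos2\theta)=O(L^{\beta})$, together with the smallness of $\cos\theta$ near $\pi/2$. The key steps are as follows.

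First, I enlarge the region. Since the integrand is nonnegative and $R_1\subseteq[0,\kappa r]\times[\pi/2-L^{-1},\pi/2]$, the integral is bounded by the integral over this rectangle. The $\phi$-length is $\kappa r\le\pi/2$, which is a constant independent of $L$ and of $A$. So it suffices to bound the one-dimensional integral
\begin{equation*}
    \int_{\pi/2-L^{-1}}^{\pi/2}\abs[\Big]{\Jacobi{L}{\alpha+1}{\beta}(\cos(2\theta))}^{2}(\sin\theta)^{2\alpha+1}(\cos\theta)^{2\beta+1}\dd\theta .
\end{equation*}

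Second, I estimate this integral. I use $(\sin\theta)^{2\alpha+1}\le 1$ (note $2\alpha+1\ge -1$; for $\RP^1$ or $\S^1$ one has $\alpha=-1/2$, so the exponent is $0$ and the bound still holds). I use $\cos\theta=\sin(\pi/2-\theta)\le \pi/2-\theta\le L^{-1}$; since $2\beta+1\ge0$ in every row of Table \ref{table:alphaomega} ($\beta\ge-1/2$), this gives $(\cos\theta)^{2\beta+1}\le L^{-(2\beta+1)}$. Together with $\abs{\Jacobi{L}{\alpha+1}{\beta}}^2=O(L^{2\beta})$, the integrand is $O(L^{2\beta}L^{-2\beta-1})=O(L^{-1})$ on an interval of length $L^{-1}$. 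The integral is therefore $O(L^{-2})$.

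A sharper route avoids the crude pointwise bound on $\cos\theta$: substitute $u=\pi/2-\theta$ and integrate $u^{2\beta+1}$ exactly, which gives $L^{2\beta}\cdot L^{-2\beta-2}/(2\beta+2)$, the same order. I regard the only delicate point as the sign of the exponent $2\beta+1$. For $\RP^d$ it vanishes, and there the bound still works with $(\cos\theta)^0=1$, since $L^{2\beta}=L^{-1}$ and the integral is $L^{-1}\cdot L^{-1}=L^{-2}$. I also need the constants in Szegő's bound to be uniform, which they are, since they depend only on $\alpha,\beta$. Multiplying by the bounded $\phi$-length yields $O(L^{-2})$ uniformly in $r$.
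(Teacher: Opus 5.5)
Your proof is correct and is essentially the paper's argument. Like the paper, you enlarge $R_1$ to the rectangle $[0,\kappa r]\times[\pi/2-L^{-1},\pi/2]$, bound $\sin\theta\le 1$, $\cos\theta\le\pi/2-\theta$ and $\abs{\Jacobi{L}{\alpha+1}{\beta}(\cos 2\theta)}=O(L^{\beta})$, and then integrate; your ``sharper route'' of integrating $(\pi/2-\theta)^{2\beta+1}$ exactly is precisely what the paper does. The only blemish is the aside ``$2\alpha+1\ge -1$'', which should read $2\alpha+1\ge 0$ (as your own $\S^1$/$\RP^1$ remark shows), and it is harmless anyway because $\sin\theta$ is bounded below near $\pi/2$.
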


\begin{proof}
    In this region, we use $\sin\theta\leq 1$, $\cos\theta\leq \pi/2-\theta$, and
    \begin{equation*}
        \abs[\Big]{\Jacobi{L}{\alpha+1}{\beta}(\cos(2\theta))}=O(L^{\beta}).
    \end{equation*}
    Then,
    \begin{align*}
        \MoveEqLeft\iint_{R_1}\abs[\Big]{\Jacobi{L}{\alpha+1}{\beta}(\cos(2\theta))}^{2}
        (\sin\theta)^{2\alpha+1}(\cos\theta)^{2\beta+1}\dd\theta\dd \phi\\
        &=O\biggl(L^{2\beta}\int_{0}^{\kappa r} \int_{\pi/2-L^{-1}}^{\pi/2}(\pi/2-\theta)^{2\beta+1}\dd\theta\dd\phi\biggr)=O(L^{-2}).\qedhere
    \end{align*}
\end{proof}

\begin{lemma}\label{lemma:R2}
    The following estimate holds:
    \begin{equation*}
        \iint_{R_2}\abs[\Big]{\Jacobi{L}{\alpha+1}{\beta}(\cos(2\theta))}^{2}
    (\sin\theta)^{2\alpha+1}
    (\cos\theta)^{2\beta+1}\dd\theta\dd \phi=O(L^{-1}\log L).
    \end{equation*}
\end{lemma}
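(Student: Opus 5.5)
In $R_2$ we have $L^{-1}\le\theta\le\pi/2-L^{-1}$, so the middle case of the Jacobi bound applies:
\[
\abs[\Big]{\Jacobi{L}{\alpha+1}{\beta}(\cos(2\theta))}^{2}=O\bigl(L^{-1}(\sin\theta)^{-2\alpha-3}(\cos\theta)^{-2\beta-1}\bigr).
\]
Multiplying by the weight $(\sin\theta)^{2\alpha+1}(\cos\theta)^{2\beta+1}$ cancels the cosine factor exactly. What remains is
\[
O\bigl(L^{-1}(\sin\theta)^{-2}\bigr)=O\bigl(L^{-1}\theta^{-2}\bigr),
\]
since $\sin\theta\ge 2\theta/\pi$ on $[0,\pi/2]$. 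So the plan is to bound the integral over $R_2$ by
\[
O\Bigl(L^{-1}\int_{\phi_0}^{\kappa r-L^{-1}}\int_{\kappa r-\phi}^{\pi/2-L^{-1}}\theta^{-2}\dd\theta\dd\phi\Bigr),\qquad \phi_0=\max(0,\kappa r-\pi/2+L^{-1}).
\]

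Next, enlarge the inner domain to $\theta\in[\kappa r-\phi,\infty)$. This makes the inner integral at most $1/(\kappa r-\phi)$. Substituting $u=\kappa r-\phi$, and noting that $\phi\in[\phi_0,\kappa r-L^{-1}]$ corresponds to $u\in[L^{-1},\min(\kappa r,\pi/2-L^{-1})]\subseteq[L^{-1},\pi/2]$, the bound becomes
\[
O\Bigl(L^{-1}\int_{L^{-1}}^{\pi/2}\frac{\dd u}{u}\Bigr)=O\bigl(L^{-1}\log(\pi L/2)\bigr)=O(L^{-1}\log L).
\]
The constants depend only on $\alpha,\beta$ and not on $r$, as required.

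The only real care needed is bookkeeping. One must check that the parametrization of $R_2$ keeps $\theta$ inside the middle regime $[L^{-1},\pi/2-L^{-1}]$; this holds because $\theta\ge\kappa r-\phi\ge L^{-1}$. One must also check that the lower endpoint $u\ge L^{-1}$ is what produces the logarithm. If $R_2$ is empty (for instance when $\kappa r<L^{-1}$), the bound is trivial. No estimate beyond the cited Szegő bound is needed.
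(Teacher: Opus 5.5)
Your proposal is correct and follows essentially the same route as the paper. You apply the middle-range Szegő bound so that the weight reduces the integrand to $O(L^{-1}\theta^{-2})$, bound the $\theta$-integral by $1/(\kappa r-\phi)$, and let the cutoff $\kappa r-\phi\ge L^{-1}$ produce the $\log L$; your handling of the lower endpoint $\phi_0$ is just slightly more explicit than the paper's.
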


\begin{proof}
   We now use $\sin\theta\geq \frac{2}{\pi}\theta$ and 
    \begin{equation*}
        \abs[\Big]{\Jacobi{L}{\alpha+1}{\beta}(\cos(2\theta))}=(\sin\theta)^{-\alpha-3/2}(\cos\theta)^{-\beta-1/2}O(L^{-1/2}).
    \end{equation*}
    Then,
    \begin{align*}
        \MoveEqLeft\iint_{R_2}\abs[\Big]{\Jacobi{L}{\alpha+1}{\beta}(\cos(2\theta))}^{2}
        (\sin\theta)^{2\alpha+1}
        (\cos\theta)^{2\beta+1}\dd\theta\dd \phi\\
        &=O\biggl(L^{-1}\int_{0}^{\kappa r-L^{-1}}\int_{\kappa r-\phi}^{\pi/2-L^{-1}}\theta^{-2}\dd\theta\dd\phi\biggr)\\
        &=O\biggl(L^{-1}\int_{0}^{\kappa r-L^{-1}}\frac{1}{\kappa r-\phi}\dd\phi\biggr)\\
        &=O(L^{-1}\log L).\qedhere
    \end{align*}
\end{proof}

\begin{lemma}\label{lemma:R3}
    The following estimate holds:
    \begin{equation*}
        \iint_{R_3}\abs[\Big]{\Jacobi{L}{\alpha+1}{\beta}(\cos(2\theta))}^{2}
    (\sin\theta)^{2\alpha+1}
    (\cos\theta)^{2\beta+1}\dd\theta\dd \phi=O(L^{-1}).
    \end{equation*}
\end{lemma}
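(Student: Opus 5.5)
In $R_3$ the variable $\phi$ ranges over an interval of length $L^{-1}$ and $\theta$ ranges over $[L^{-1},\pi/2-L^{-1}]$. The plan is to bound the inner $\theta$-integral uniformly in $\phi$ by $O(1)$ and then multiply by the length $L^{-1}$ of the $\phi$-interval.

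On $R_3$ the middle case of the Szegő estimate applies, so
\begin{equation*}
    \abs[\Big]{\Jacobi{L}{\alpha+1}{\beta}(\cos(2\theta))}^{2}
    (\sin\theta)^{2\alpha+1}(\cos\theta)^{2\beta+1}
    = O\bigl(L^{-1}(\sin\theta)^{-2}\bigr),
\end{equation*}
since the powers of $\cos\theta$ cancel exactly: $-2\beta-1+2\beta+1=0$. We then use $\sin\theta\geq \frac{2}{\pi}\theta$ on $[0,\pi/2]$, which gives, for each fixed $\phi$,
\begin{equation*}
    \int_{L^{-1}}^{\pi/2-L^{-1}} O\bigl(L^{-1}\theta^{-2}\bigr)\dd\theta
    \leq O\bigl(L^{-1}\cdot L\bigr)=O(1).
\end{equation*}
Integrating this over $\phi\in[\kappa r-L^{-1},\kappa r]$, an interval of length at most $L^{-1}$, yields the total bound $O(L^{-1})$.

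The only point requiring care is the lower endpoint $\theta=L^{-1}$: there the factor $\theta^{-2}$ is as large as $L^2$, and integrating it produces exactly the compensating factor $L$. This is the essential difference from $R_2$. In $R_2$ the lower limit $\kappa r-\phi$ varies with $\phi$, which is what produces the logarithm; in $R_3$ the cutoff is fixed at $L^{-1}$, so no logarithm appears.

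If $\kappa r<L^{-1}$, we intersect the $\phi$-range with $[0,\kappa r]$; this only shrinks the region, and the same bound holds. The constants are uniform in $r$, as required for \cref{th:Variance-harmonic}.
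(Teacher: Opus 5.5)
Your proposal is correct and matches the paper's proof: you apply the middle-range Szeg\H{o} bound so the cosine powers cancel, use $\sin\theta\geq \frac{2}{\pi}\theta$ to get the inner integral $O\bigl(L^{-1}\int_{L^{-1}}^{\pi/2-L^{-1}}\theta^{-2}\dd\theta\bigr)=O(1)$, and multiply by the $\phi$-interval length $L^{-1}$. Your remarks on why no logarithm appears here (unlike in $R_2$) and on the case $\kappa r<L^{-1}$ are accurate, though the paper does not spell them out.
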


\begin{proof}
    Again, we use $\sin\theta\geq \frac2\pi\theta$, and
     \begin{equation*}
        \abs[\Big]{\Jacobi{L}{\alpha+1}{\beta}(\cos(2\theta))}=(\sin\theta)^{-\alpha-3/2}(\cos\theta)^{-\beta-1/2}O(L^{-1/2}).
    \end{equation*}
    Then,
    \begin{align*}
        \MoveEqLeft \iint_{R_3}\abs[\Big]{\Jacobi{L}{\alpha+1}{\beta}(\cos(2\theta))}^{2}
        (\sin\theta)^{2\alpha+1}
        (\cos\theta)^{2\beta+1}\dd\theta\dd \phi\\
        &=O\biggl(L^{-1}\int_{\kappa r-L^{-1}}^{\kappa r}\int_{L^{-1}}^{\pi/2-L^{-1}}\theta^{-2} \dd\theta\dd\phi\biggr)=O(L^{-1}),
    \end{align*}
    by direct computation.
\end{proof}

\begin{lemma}\label{lemma:R4}
    The following estimate holds:
    \begin{equation*}
        \iint_{R_4}\abs[\Big]{\Jacobi{L}{\alpha+1}{\beta}(\cos(2\theta))}^{2}
    (\sin\theta)^{2\alpha+1}
    (\cos\theta)^{2\beta+1}\dd\theta\dd \phi=O(L^{-1}).
    \end{equation*}
\end{lemma}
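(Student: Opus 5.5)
In $R_4$ we have $\kappa r-L^{-1}\le\phi\le\kappa r$ and $\kappa r-\phi\le\theta\le L^{-1}$, so $\theta$ ranges over $[0,L^{-1}]$. This is the regime where the first case of the Jacobi bound from \cite{szego1975orthogonal} applies: $\abs{\Jacobi{L}{\alpha+1}{\beta}(\cos 2\theta)}=O(L^{\alpha+1})$. The plan is to bound the integrand crudely and exploit the smallness of both the $\theta$-interval and the $\phi$-interval. We use $\cos\theta\le 1$, and $\sin\theta\le\theta$. The exponent $2\beta+1$ is nonnegative in every case of Table~\ref{table:alphaomega}, since $\beta\ge -1/2$; the exponent $2\alpha+1$ is positive since $\alpha\ge -1/2$.

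With these bounds the integrand is $O\bigl(L^{2\alpha+2}\theta^{2\alpha+1}\bigr)$. We enlarge the region to the square $[\kappa r-L^{-1},\kappa r]\times[0,L^{-1}]$, which is legitimate because the integrand is nonnegative. This gives
\begin{equation*}
\iint_{R_4}\dotsb
=O\biggl(L^{2\alpha+2}\int_{\kappa r-L^{-1}}^{\kappa r}\int_{0}^{L^{-1}}\theta^{2\alpha+1}\dd\theta\dd\phi\biggr)
=O\bigl(L^{2\alpha+2}\cdot L^{-1}\cdot L^{-(2\alpha+2)}\bigr)=O(L^{-1}).
\end{equation*}
The inner integral equals $L^{-(2\alpha+2)}/(2\alpha+2)$, and this is finite since $2\alpha+2>0$.

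The only points requiring care are sign and edge issues, and none is a genuine obstacle. First, for $\RP^d$ with $d$ small, or for $\S^1$, the exponent $\alpha$ may be $-1/2$; still $2\alpha+2=1>0$, so the $\theta$-integral converges. Second, when $\kappa r<L^{-1}$ the lower $\phi$-limit should be read as $\max(0,\kappa r-L^{-1})$, which only shrinks the region. Third, for $L$ large we have $L^{-1}<\pi/2-L^{-1}$, so the Jacobi estimate for $\theta\le L^{-1}$ is the correct one. With these remarks the estimate is uniform in $r$, and the constants depend only on $\alpha$ and $\beta$, as required for Theorem~\ref{th:Variance-harmonic}.
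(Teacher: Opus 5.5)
Your proposal is correct and is essentially the paper's proof. It uses the same three bounds, $\cos\theta\le 1$, $\sin\theta\le\theta$ and $\abs{\Jacobi{L}{\alpha+1}{\beta}(\cos 2\theta)}=O(L^{\alpha+1})$; the only difference is that you enlarge the $\theta$-range to $[0,L^{-1}]$, whereas the paper integrates exactly from $\kappa r-\phi$ to $L^{-1}$ and then drops the $(\kappa r-\phi)^{2\alpha+2}$ term, and when $\alpha=-1/2$ the exponent $2\alpha+1$ is zero rather than positive, though nonnegativity is all that $(\sin\theta)^{2\alpha+1}\le\theta^{2\alpha+1}$ requires.
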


\begin{proof}
    In this region, we use $\cos\theta\leq 1$, $\sin\theta\leq \theta$, and
    \begin{equation*}
        \abs[\Big]{\Jacobi{L}{\alpha+1}{\beta}(\cos(2\theta))}=O(L^{\alpha+1}).
    \end{equation*}
    Then,
    \begin{align*}
        \MoveEqLeft\iint_{R_4}\abs[\bigg]{\Jacobi{L}{\alpha+1}{\beta}(\cos(2\theta))}^{2}
        (\sin\theta)^{2\alpha+1}
        (\cos\theta)^{2\beta+1}\dd\theta\dd \phi\\
        &=O\biggl(L^{2\alpha+2}\int_{\kappa r-L^{-1}}^{\kappa r}\int_{\kappa r -\phi}^{L^{-1}}\theta^{2\alpha+1}\dd\theta\dd\phi\biggr)\\
        &=O\biggl(L^{2\alpha+2}\int_{\kappa r-L^{-1}}^{\kappa r}\bigl(L^{-2\alpha-2}-(\kappa r-\phi)^{2\alpha+2}\bigr)\dd\phi\biggr)\\
        &=O(L^{-1}).\qedhere
    \end{align*}
\end{proof}

From the previous lemmas, it follows that $I=O(L^{-1}\log L)$, which completes the proof.

\section{The variance of the projective ensemble on $\CP^d$.\\ Proof of \cref{th:Variancep}}\label{sec:variancesproj}

In this section, we prove \cref{th:Variancep}. The proof is very similar to that of \cref{th:Variance-harmonic}, with the kernel replaced by that of the projective ensemble. In this case, using the notation of \cref{sec:variancesh}, the integral to be estimated becomes
\begin{equation*}
    \Variance(N_A)\leq N^2\int_{p\in A}\int_{q\in B(p,r-s)^c}\abs[\bigg]{\innerprod[\bigg]{\frac{p}{\norm{p}}}{\frac{q}{\norm{q}}}}^{2L}\dd q \dd p.
\end{equation*}
Note that (the absolute value of) the inner product inside the integral is precisely the cosine of the Riemannian distance from $p$ to $q$. Again, proceeding as in \cref{sec:variancesh}, we can write
\begin{equation*}
    \Variance(N_A)\leq 4d^2N^2\int_{0}^r(\sin\phi)^{2d-1}\cos(\phi)\int_{r-\phi}^{\pi/2}(\sin\theta)^{2d-1}\cos(\theta)\cos^{2L}\theta\dd\theta\dd\phi.
\end{equation*}
We bound this integral by elementary means, using that $\cos\phi,\cos\theta\leq 1$ and that
\begin{equation*}
    \cos^{2}\theta\leq e^{-\theta^2/2},\quad \theta\in[0,\pi/2],
\end{equation*}
which implies
\begin{equation*}
    \Variance(N_A)\leq4d^2N^2\int_{0}^r\phi^{2d-1}\int_{r-\phi}^{\pi/2}\theta^{2d-1}e^{-\theta^2L/2}\dd\theta\dd\phi.
\end{equation*}
The change of variables $\theta^2L/2=v$ allows us to bound the inner integral by
\begin{equation*}
    \frac{1}{L^d}\int_{L(r-\phi)^2/2}^\infty (2v)^{d-1}e^{-v}\dd v.
\end{equation*}
Then, using \eqref{eq:dyL} we obtain, for some constant $K>0$ depending only on $d$,
\begin{equation*}
    \frac{\Variance(N_A)}{L^{d-1}}\leq K L\int_0^r\phi^{2d-1}\int_{L(r-\phi)^2/2}^\infty v^{d-1}e^{-v}\dd v\dd\phi.
\end{equation*}
A further change of variables $v=t+L(r-\phi)^2/2$ in the inner integral gives
\begin{equation*}
    \frac{\Variance(N_A)}{L^{d-1}}\leq K L\int_0^r\frac{\phi^{2d-1}}{e^{L(r-\phi)^2/2}}\int_{0}^\infty (t+L(r-\phi)^2/2)^{d-1}e^{-t}\dd t\dd\phi.
\end{equation*}
Note that
\begin{align*}
    \int_{0}^\infty (t+L(r-\phi)^2/2)^{d-1}e^{-t}\dd t
    &=\sum_{j=0}^{d-1}\binom{d-1}{j}\int_{0}^\infty t^j\Bigl(L(r-\phi)^2/2\Bigr)^{d-1-j}e^{-t}\dd t\\
    &=\sum_{j=0}^{d-1}\binom{d-1}{j}\Gamma(j+1)\Bigl(L(r-\phi)^2/2\Bigr)^{d-1-j},
\end{align*}
from which
\begin{align*}
    \frac{\Variance(N_A)}{L^{d-1}}&\leq K\sum_{j=0}^{d-1} L\int_0^r\frac{\phi^{2d-1}}{e^{L(r-\phi)^2/2}}\Bigl(L(r-\phi)^2/2\Bigr)^{d-1-j}\dd \phi\\
    &=K\sum_{j=0}^{d-1} \int_0^{Lr^2/2}\frac{\bigl(r-\sqrt{2u/L}\bigr)^{2d-1}}{e^{u}}u^{d-1-j}\frac{\sqrt{L}}{\sqrt{2u}}\dd u\\
    &\leq K\sum_{j=0}^{d-1} \int_0^{\infty}\frac{1}{e^{u}}u^{d-1-j}\frac{\sqrt{L}}{\sqrt{2u}}\dd u\\
    &\leq K \sqrt{L}.
\end{align*}
We thus have
\begin{equation*}
    \Variance(N_A)\leq KL^{d-1/2},
\end{equation*}
and \cref{th:Variancep} follows from \eqref{eq:dyL}.


\bibliographystyle{amsplain}

\end{document}